\theoremstyle{plain}
\newtheorem{theorem}{Theorem}[section]
\theoremstyle{remark}
\newtheorem{remark}[theorem]{Remark}
\newtheorem{example}[theorem]{Example}
\theoremstyle{plain}
\newtheorem{corollary}[theorem]{Corollary}
\newtheorem{lemma}[theorem]{Lemma}
\newtheorem{proposition}[theorem]{Proposition}
\numberwithin{equation}{section}
\def\R{{\mathbb R}}
\def\C{{\mathbb C}}
\newcommand{\E}{{\mathbb E}}
\renewcommand{\P}{{\mathbb P}}
\newcommand{\F}{{\mathscr F}}
\newcommand{\g}{\gamma}
\renewcommand{\O}{\Omega}
\newcommand{\calL}{{\mathscr L}}
\newcommand{\n}{\Vert}
\newcommand{\nnn}{|\!|\!|}
\newcommand{\one}{{{\bf 1}}}
\newcommand{\beq}{\begin{equation}}
\newcommand{\eeq}{\end{equation}}
\begin{document}

\author{Mark Veraar}
\address{Delft Institute of Applied Mathematics\\
Delft University of Technology \\ P.O. Box 5031\\ 2600 GA Delft\\The
Netherlands} \email{M.C.Veraar@tudelft.nl}

\author{Lutz Weis}
\address{Institut f\"ur Analysis \\
Universit\"at Karlsruhe (TH)\\
D-76128  Karls\-ruhe\\Germany}
\email{Lutz.Weis@math.uni-karlsruhe.de}

\title[Maximal estimates for stochastic convolutions]{A note on maximal estimates for stochastic convolutions}

\begin{abstract}
In stochastic partial differential equations it is important to have pathwise regularity properties of stochastic convolutions. In this note we present a new sufficient condition for the pathwise continuity of stochastic convolutions in Banach spaces.
\end{abstract}

\keywords{stochastic convolutions, maximal inequalities, path-continuity, stochastic partial differential equations, $H^\infty$-calculus, $\g$-radonifying operators, exponential tail estimates}

\subjclass{Primary: 60H15; Secondary: 35B65, 35R60, 46B09, 47D06}

%60H15  (1973-now) Stochastic partial differential equations [See also 35R60]
%35B65  (1980-now) Smoothness and regularity of solutions
%35D10  (1973-2009) Regularity of generalized solutions
%35K10  (1973-now) Second-order parabolic equations
%35R60  (1980-now) Partial differential equations with randomness, stochastic partial differential equations
%46B09  (1991-now) Probabilistic methods in Banach space theory [See also 60Bxx]
%47A60  (1973-now) Functional calculus
%47D06  (1991-now) One-parameter semigroups and linear evolution equations [See also 34G10, 34K30]

\thanks{The first author was supported by a VENI subsidy 639.031.930
of the Netherlands Organization for Scientific Research (NWO). The second named author is supported by a grant from the Deutsche Forschungsgemeinschaft (We 2847/1-2).}

\maketitle

\section{Introduction and main result}
Let $(\O, \mathcal{A}, \P)$ be a probability space with filtration $(\F_t)_{t\geq 0}$.
Let $(S(t))_{t\geq 0}$ be a strongly continuous semigroup on a Banach space $X$. We will be interested in obtaining conditions for path-continuity of the stochastic convolution
\[S\diamond G(t):=\int_0^t S(t-s) G(s)\, d W_H(s),\]
where $G:\R_+\times\O\to \calL(H,X)$ is such that the stochastic integral with respect to the cylindrical Brownian motion $W_H$ exists. There are many such continuity results in the literature (see \cite{Brz2,BrzPes, DPZ, HauSei1, HauSei2, NVW3} and references therein).

Our methods to obtain continuity results are based on techniques similar to the ones in \cite{BrzPes,HauSei1,HauSei2}. The results are comparable with \cite{BrzPes} but are of independent interest.
The methods we present can also be applied for other stochastic convolutions $\int_0^t S(t-s) d M(s)$, where $M$ is an $X$-valued local martingale, as soon as one has a ``decent" stochastic integration theory for integration with respect to $M$. For instance, some of our methods can also be applied in the case $W_H$ is replaced by a continuous local martingale (see \cite{Vcont}) or a L\'evy process (cf. \cite{BrzHaus}). In this paper this has not been considered and we leave this to the interested reader.

\medskip

For $\sigma\in (0,\pi)$ let
\[\Sigma_\sigma := \{\lambda\in \C\setminus\{0\}: \arg(\lambda)<\sigma\}\]
denote the open sector of angle $\sigma$ in the complex plane.
A closed and densely defined operator $A$ on $X$ is {\em sectorial of type $\phi\in [0,\pi)$} if $A$ is one-to-one with dense range and for all $\sigma\in (\phi,\pi)$ we have $\Sigma_\sigma\subseteq \varrho(A)$ and
$$\sup_{\lambda\in \Sigma_{\sigma}} \|\lambda R(\lambda,A)\| <\infty.$$
Here, $R(\lambda,A) := (\lambda-A)^{-1}$.

We introduce the following condition on a sectorial operator $A$.
\begin{enumerate}
\item[$(H)$]\label{hyp:H} The operator $-A$ has a bounded $H^\infty$-calculus of angle $<\pi/2$.
\end{enumerate}
For details on $H^\infty$-calculus for sectorial operators we refer the reader to \cite{Haase:2,KuWe,McI,Weis-survey}.
The condition $(H)$ implies that $A$ generates an analytic semigroup $S(t) = e^{tA}$. Many differential operators on $L^q$-spaces with $q\in (1, \infty)$ which generate an analytic semigroup satisfy condition $(H)$.
We will show how one can use condition $(H)$ to obtain a continuity result for stochastic convolutions. Below we present several situations which are not covered by the existing literature. The existing results always require that the semigroup is contractive or quasi-contractive. Recall that $S(t)$ is called {\em quasi-contractive} if there exists a $w\in \R$ such that for all $t\geq 0$, $\|S(t)\|\leq e^{w t}$.

The next theorem is our first main result. It will be formulated for UMD Banach spaces $X$ with type $2$. Recall that $X=L^q$ with $q\in [2, \infty)$ is an example of a UMD space with type $2$. Moreover, every space which is isomorphic to a closed subspace of $L^q$ with $q\in (1, \infty)$ is UMD and of type $2$. For UMD spaces with type $2$ a class of stochastically integrable processes is given by the adapted and strongly measurable processes $G$ for which $G\in L^2(\R_+;\g(H,X))$ almost surely (see Proposition \ref{prop:stochinttheorytype2} for details). The set of all adapted $G$ which are in $L^2(\R_+;\g(H,X))$ is denoted by $L^0_{\F}(\O;L^2(\R_+;\g(H,X)))$.
For details on the space of $\gamma$-radonifying operators $\g(H,X)$ we refer to \cite{vNeeoverview}.

\begin{theorem}\label{thm:main1}
Let $X$ be a UMD space with type $2$. Assume $A$ satisfies hypothesis $(H)$. Then for all $G\in L^0_{\F}(\O;L^2(\R_+;\g(H,X)))$ the process $S\diamond G$ has a version with continuous paths. Moreover, for all $p\in (0,\infty)$, the following maximal estimate holds:
\begin{equation}\label{eq:maxest}
\big(\E\sup_{t\geq 0}\|S\diamond G(t)\|^p\big)^{1/p} \leq C_1 C_{2} (\E\|G\|_{L^2(\R_+;\g(H,X))}^p)^{1/p},
\end{equation}
where $C_1$ depends on $A$, and $C_2$ depends on $X$ and $p$.
\end{theorem}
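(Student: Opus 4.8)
The plan is to use the factorization method of Da~Prato, Kwapie\'n and Zabczyk, transplanted to the UMD type~$2$ setting, with hypothesis $(H)$ entering through quadratic ($\g$-)estimates rather than operator-norm bounds. Since $-A$ is sectorial of angle $<\pi/2$, the semigroup $S$ is bounded analytic, so $M:=\sup_{t\ge0}\|S(t)\|<\infty$; it is this uniform bound --- and \emph{not} exponential decay --- that must suffice, which is exactly why the functional calculus will be indispensable. Fix $\a\in(0,1/2)$ and use the subordination identity
\[S(t-s)=\frac{\sin(\pi\a)}{\pi}\int_s^t(t-r)^{\a-1}(r-s)^{-\a}S(t-r)S(r-s)\,dr\]
to write $S\diamond G=c_\a\,R_\a Y$, where
\[Y(r):=\int_0^r(r-s)^{-\a}S(r-s)G(s)\,dW_H(s),\qquad R_\a f(t):=\int_0^t(t-r)^{\a-1}S(t-r)f(r)\,dr.\]
This splits the problem into a stochastic factor $Y$ and a deterministic convolution $R_\a$.

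First I would treat the deterministic operator $R_\a$. Choosing $p$ large enough that $\a>1/p$, I want $R_\a\colon L^p(\R_+;X)\to C_0(\R_+;X)$ to be bounded, which yields both a continuous version of $S\diamond G$ and the pointwise bound $\sup_{t\ge0}\|S\diamond G(t)\|\le C\|Y\|_{L^p(\R_+;X)}$. Over a finite horizon this is the classical H\"older estimate using only $M<\infty$; over all of $\R_+$ the crude estimate diverges (the integral $\int^\infty\tau^{(\a-1)p'}\,d\tau$ is not finite for a merely bounded semigroup), and here hypothesis $(H)$ is used: through the $\g$-multiplier theorem of Kalton--Weis and the square-function characterization of $\|\cdot\|_X$ afforded by the bounded $H^\infty$-calculus, $R_\a$ is shown to map into $C_0(\R_+;X)$ with a constant $C_1$ depending only on $A$.

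The heart of the matter is the stochastic factor. Applying the stochastic integration theory for UMD type~$2$ spaces (Proposition~\ref{prop:stochinttheorytype2}) pointwise in $r$, together with Minkowski's inequality to interchange $L^p(\O)$ and the $L^2$ in time, reduces the estimate for $Y$ to the deterministic quadratic inequality
\[\int_0^\infty\big\|s\mapsto(r-s)^{-\a}S(r-s)G(s)\big\|_{\g(L^2(0,r;H),X)}^2\,dr\le C_1^2\int_0^\infty\|G(s)\|_{\g(H,X)}^2\,ds.\]
This is where hypothesis $(H)$ does its essential work: since $\|S(t)\|$ need not decay, the naive Fubini bound fails at infinity, and one instead invokes the square-function estimate $\|x\|_X\eqsim\|t\mapsto\psi(-tA)x\|_{\g(L^2(\R_+,dt/t),X)}$ valid under a bounded $H^\infty$-calculus, combined with the $\g$-boundedness of the rescaled resolvent/semigroup family, to extract the cancellation that controls the singular kernel $(r-s)^{-\a}S(r-s)$. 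I expect this quadratic $\g$-estimate, uniform over the infinite horizon, to be the main obstacle.

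Combining the two factors gives the maximal estimate \eqref{eq:maxest} for one fixed large $p$, and a.s.\ path-continuity of a version of $S\diamond G$. To reach arbitrary $p\in(0,\infty)$ I would use the Gaussian nature of the stochastic integral: the Kahane--Khintchine inequalities (equivalently, the exponential tail estimates for $X$-valued Gaussian stochastic integrals) show that all $L^p(\O)$-moments of $\sup_{t\ge0}\|S\diamond G(t)\|$ are comparable, which transfers the estimate from the one value of $p$ already obtained to every $p\in(0,\infty)$, with $C_2$ absorbing the $X$- and $p$-dependence. Finally, for general $G\in L^0_{\F}(\O;L^2(\R_+;\g(H,X)))$ the inequality is trivial when its right-hand side is infinite, so one may assume $\E\|G\|_{L^2(\R_+;\g(H,X))}^p<\infty$ and, if needed, pass from simple adapted integrands to the general case by a stopping-time/localization argument.
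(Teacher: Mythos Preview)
Your proposal takes a route---the Da~Prato--Kwapie\'n--Zabczyk factorization---that is entirely different from the paper's and, on the unbounded interval $\R_+$, does not go through as outlined.

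The paper argues via the dilation theorem of Fr\"ohlich--Weis \cite{FW}: hypothesis $(H)$ yields a space $Y=\gamma(L^2(\R),X)$, an isomorphic embedding $J:X\to Y$, a bounded projection $P$ onto $J(X)$, and a $C_0$-group of \emph{isometries} $(U(t))_{t\in\R}$ on $Y$ such that $JS(t)=PU(t)J$ for $t\ge0$. One then writes
\[
JS\diamond G(t)=PU(t)\int_0^t U(-s)JG(s)\,dW_H(s),
\]
reducing everything to the continuous local martingale on the right, taken in the space $Y$. Since $Y$ is again UMD with type~$2$ (Lemma~\ref{lem:UMDtype2}), Proposition~\ref{prop:stochinttheorytype2} applied in $Y$ gives both path-continuity and the maximal estimate, directly for every $p\in(0,\infty)$. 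The $H^\infty$-calculus enters \emph{only} through the existence of this dilation; no separate infinite-horizon estimate for a fractional convolution is needed.

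Your plan has two concrete gaps. First, the displayed quadratic inequality is false in general: take $-A=-\Delta$ on $X=L^2(\R)$, $H=\R$, and $G(s)=\one_{[0,T]}(s)\,g$ with $\hat g$ supported in $\{|\xi|<\epsilon\}$; then $\|S(\tau)g\|\ge e^{-\epsilon^2\tau}\|g\|$, and a short computation gives a lower bound for the left-hand side of order $\epsilon^{4\a-2}\,T\|g\|^2$, which blows up as $\epsilon\downarrow0$, while the right-hand side is $C\,T\|g\|^2$. The factorization separates $Y$ and $R_\a$, and neither factor individually inherits the cancellation that the full convolution $S\diamond G$ enjoys on $\R_+$; the square-function machinery you invoke is precisely what \emph{builds} the Fr\"ohlich--Weis dilation, but it does not repair the split estimates. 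Second, your extrapolation in $p$ via Kahane--Khintchine is incorrect: for adapted random $G$ the stochastic integral is not Gaussian (already $\int_0^t W\,dW=\tfrac12(W(t)^2-t)$ in one dimension), so $L^p(\O)$-moments of $\sup_t\|S\diamond G(t)\|$ are not automatically comparable. One can descend to smaller $p$ by a Lenglart-type stopping argument, but not ascend; the dilation proof avoids this issue because Proposition~\ref{prop:stochinttheorytype2} already covers all $p\in(0,\infty)$.
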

As a corollary by an easy translation argument one can prove a version on bounded intervals $[0,T]$ in the case where only $A-w$ satisfies hypothesis $(H)$ for some $w>0$. This gives an extra exponential factor $e^{w T}$.

The proof of Theorem \ref{thm:main1} will be given in Section \ref{sec:proof} and uses a dilation argument in a similar spirit as \cite{HauSei1,HauSei2}. However, the enlargements of the spaces we need to consider are more complicated.
After the first version of this paper was written we found out that Theorem \ref{thm:main1} was also proved by Seidler in the setting $X=L^q$ with $q\in [2, \infty)$ (see \cite{Seidlernew}).

Under different conditions on the Banach space (see Section \ref{sec:alt}) it was proved in \cite{BrzPes} that for every $A$ for which $S(t)$ is (quasi)-contractive,
there exists a continuous version of $S\diamond G$.
Our result is not covered by this result since there are many interesting examples of differential operators $A$ which satisfy $(H)$, but for which $S(t)$ is not quasi-contractive, or not known to be quasi-contractive. For instance in \cite{LaMa} it was proved that semigroups generated by differential operators of order higher than two, are {\em never} contractive. Moreover, second order differential operators with irregular coefficients are often not quasi-contractive (see \cite[Theorems 1.1 and 1.2]{LSV}). Except for trivial cases, almost no positive results on quasi-contractiveness of semigroups generated by systems of differential operators are known. The only (easy to describe) class of scalar differential operators which generate an analytic semigroups which is quasi-contractive, seems to be second order differential operators in divergence form with smooth coefficient. Already in \cite{Amdual} it was shown that under fairly general boundary conditions these operators generate a quasi-contractive semigroup.

On the other hand, \cite{BrzPes} can be applied for instance to translation semigroups on $L^p$ with $p\in [2, \infty)$. This is not covered by Theorem \ref{thm:main1}, because condition $(H)$ implies analyticity of the semigroup.
In Section \ref{sec:alt} we present an alternative approach to obtain Theorem \ref{thm:main1} with slightly different assumptions on the Banach space $X$.

The following frequently arising examples in applications are not covered by the continuity theorem in \cite{BrzPes}.
\begin{example}\label{ex:secondnondiv}
Let $q\in (1, \infty)$. Let $A$ be a system of second order operators on a $C^2$-domain $\mathcal{O}\subset \R^n$:
\[(A f)(x) = \sum_{i,j=1}^n a_{ij}(x) D_i D_j f(s) + \sum_{i=1}^n b_i(x) D_i f(x) +c(x) f(x)\]
with Dirichlet boundary conditions. Let $X=L^q(\mathcal{O})$. If the $a_{ij}\in C^\epsilon(\overline{\mathcal{O}};\C^{N\times N})$ are uniformly parameter elliptic on $\overline{\mathcal{O}}$, and $b_i\in L^\infty(\mathcal{O};\C^{N\times N})$ and $c\in L^\infty(\mathcal{O};\C^{N\times N})$, then $A-w$ satisfies $(H)$ (see \cite{DDHPV}) for some $w\in R$ large enough. Therefore, if $q\in [2, \infty)$, then Theorem \ref{thm:main1} is applicable. However, $S(t)$ is not known to be quasi-contractive in this general situation.
\end{example}
The example can also be extended to systems of higher order elliptic operators as long as the Lopatinskii-Shapiro conditions hold (see \cite{DDHPV}).
As we already said before the semigroups generated by such higher order operators are never contractive.

In Section \ref{sec:alpha} we prove another result on the path-continuity of $S\diamond G$. Here we assume less on the Banach space $X$ and on the processes $G$. The result there covers all $L^q$-spaces with $q\in (1, \infty)$.

\section{Proof of Theorem \ref{thm:main1}\label{sec:proof}}

Before, we give the proof of the theorem recall the following result on stochastic integration theory. We refer to \cite{Brz2} and \cite{NVW1} for details.

\begin{proposition}\label{prop:stochinttheorytype2}
Assume $X$ is a UMD Banach space with type $2$. If $G:\R_+\times\O\to \g(H,X)$ is adapted and strongly measurable and $G\in L^2(\R_+;\g(H,X))$ a.s., then $G$ is stochastically integrable with respect to $W_H$, and the $X$-valued process $t\mapsto \int_0^t G(s) \, d W_H(s)$ is an a.s.\ pathwise continuous local martingale. Moreover, for all $p\in (0,\infty)$ one has
\begin{align}\label{eq:type2BDG}
\E\Big(\sup_{t\geq 0} \Big\|\int_0^t G(s) \, d W_H(s)\Big\|^p\Big) \leq C_{p,X}^p \|G\|_{L^p(\O;L^2(\R_+;\g(H,X)))}^p,
\end{align}
where $C_{p,X}$ depends only on $p$ and $X$.
\end{proposition}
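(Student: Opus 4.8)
The plan is to first reduce to adapted finite-rank step processes (simple processes), for which $\int_0^t G(s)\,dW_H(s)$ is a finite sum $\sum_k (W_H(t\wedge t_{k+1})-W_H(t\wedge t_k))\,x_k$ with $x_k\in X$, and is manifestly a continuous $L^p$-martingale. The whole proposition will follow once the bound (\ref{eq:type2BDG}) is established for such $G$: adapted simple processes are dense in $L^p(\O;L^2(\R_+;\g(H,X)))$, and the space of continuous-path $X$-valued $L^p$-martingales is complete for the maximal norm $G\mapsto (\E\sup_{t\geq 0}\|\,\cdot\,\|^p)^{1/p}$, so the estimate lets one define the integral for general $G$ by approximation while preserving continuity, the (local) martingale property, and the bound.

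Second, I would prove (\ref{eq:type2BDG}) with the supremum removed, i.e.\ for the terminal value $\int_0^\infty G\,dW_H$, and for $p\in(1,\infty)$. The key tool is the UMD decoupling inequality: for a simple adapted $G$ the $L^p(\O;X)$-norm of $\int_0^\infty G\,dW_H$ is comparable, with constants depending only on $p$ and $X$, to that of the decoupled integral $\int_0^\infty G\,d\widetilde W_H$, where $\widetilde W_H$ is an independent copy of $W_H$. Conditionally on $G$ the latter is a Gaussian sum, so by the Kahane--Khintchine inequalities its $L^p$-moment is comparable to the $\g$-radonifying norm $\|G\|_{\g(L^2(\R_+;H),X)}$. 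At this point type $2$ enters through the embedding $L^2(\R_+;\g(H,X)) \embed \g(L^2(\R_+;H),X)$, valid precisely because $X$ has type $2$, which gives $\|G\|_{\g(L^2(\R_+;H),X)} \le C\,\|G\|_{L^2(\R_+;\g(H,X))}$. Chaining these comparisons yields the one-sided terminal bound $\E\|\int_0^\infty G\,dW_H\|^p \le C_{p,X}^p\,\E\|G\|_{L^2(\R_+;\g(H,X))}^p$.

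Third, I would upgrade this to the maximal estimate. Since $t\mapsto\int_0^t G\,dW_H$ is a martingale, $t\mapsto\|\int_0^t G\,dW_H\|$ is a nonnegative submartingale, so for $p\in(1,\infty)$ Doob's $L^p$-maximal inequality gives $\E\sup_{t\geq0}\|\int_0^t G\,dW_H\|^p \le c_p\,\E\|\int_0^\infty G\,dW_H\|^p$, which combined with the previous step is exactly (\ref{eq:type2BDG}). To reach $p\in(0,1]$ I would invoke Lenglart's inequality, using that $\int_0^\cdot G\,dW_H$ is dominated, in the Lenglart sense, by the nondecreasing process $t\mapsto\|G\|_{L^2(0,t;\g(H,X))}$; this transfers the estimate to every $p\in(0,\infty)$ at the cost of a new $p$-dependent constant. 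For $G$ that is only in $L^2(\R_+;\g(H,X))$ almost surely I would localize along $\tau_n=\inf\{t:\|G\|_{L^2(0,t;\g(H,X))}>n\}$, apply the above to each $G\one_{[0,\tau_n]}$, and pass to the limit by monotone convergence; the same localization yields the local martingale property, while the maximal estimate applied to differences $G_n-G_m$ of approximating simple processes shows that the integrals converge uniformly in time along a subsequence almost surely, producing the continuous version.

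I expect the main obstacle to be the second step. It is the only place where type $2$ is genuinely used, and it is responsible for the inequality being one-sided rather than a two-sided isomorphism; making it rigorous requires the correct identification of the decoupled Gaussian $L^p$-moment with the operator norm $\|G\|_{\g(L^2(\R_+;H),X)}$ together with the type-$2$ square-function embedding. A secondary, more technical difficulty is the bookkeeping in the third step: combining Lenglart's inequality for $p\le 1$ with the localization needed to descend from the $L^p(\O)$-estimate to processes that are merely almost surely square-integrable.
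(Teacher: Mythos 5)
Your proof is correct, and it is worth noting that the paper itself does not actually prove Proposition \ref{prop:stochinttheorytype2}: it recalls the result from \cite{Brz2} and \cite{NVW1}, adding only the remark that the case $p\leq 1$ follows from Lenglart's inequality \cite{Lenglart}. Your argument is a faithful reconstruction of the route taken in \cite{NVW1}: reduction to adapted finite-rank step processes, UMD decoupling to pass to an independent copy of $W_H$, the Gaussian Kahane--Khintchine identification of the decoupled moments with $\|G\|_{\g(L^2(\R_+;H),X)}$, the type-$2$ embedding $L^2(\R_+;\g(H,X))\embed \g(L^2(\R_+;H),X)$ (which is indeed where type $2$ enters and why the estimate is one-sided), then Doob for $p\in(1,\infty)$, Lenglart for $p\leq 1$ (with the domination stated for the squared maximal process against $C^2\|G\|^2_{L^2(0,t;\g(H,X))}$ --- your phrasing omits the square and the constant, but the idea is the intended one), and localization along $\tau_n$ for the merely a.s.\ square-integrable case. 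The alternative route behind the other citation, \cite{Brz2}, avoids decoupling entirely: UMD spaces with type $2$ have martingale type $2$, and the integral and maximal estimate are built directly from the martingale type $2$ inequality. That route, sharpened via \cite{Pin94}, is what produces the $C_X\sqrt{p}$ growth of the constants invoked in Remark \ref{rem:optimal} and needed for the exponential tail estimate of Theorem \ref{thm:exp1}; your decoupling route yields a legitimate constant $C_{p,X}$ but with worse dependence on $p$ (UMD decoupling constant times Kahane--Khintchine times Doob), which is harmless for the Proposition as stated but would not suffice for the quantitative refinements discussed later in the paper.
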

The case $p\leq 1$ was not considered in \cite{Brz2} or \cite{NVW1}, but can easily be obtained by an application of Lenglart's inequality (see \cite{Lenglart}).

\begin{remark}\label{rem:optimal}
Recently, in \cite{Seidlernew} Seidler found the optimal asymptotic behavior of $C_{p,X}$ from \eqref{eq:type2BDG}. Using the result of \cite{Pin94} he showed that if $X$ is $2$-smooth, then there exists a constant $C_X$ such that $C_{p,X}\leq C_X \sqrt{p}$ for $p\geq 2$. This result applies to our setting in Proposition \ref{prop:stochinttheorytype2}, since UMD spaces with type $2$ have martingale type $2$ (see \cite{Brz1}) and such spaces can be renormed such that their norm is $2$-smooth (see \cite{Pi75}).
An alternative proof of the behavior of the constant in the setting of $L^q$-spaces based on interpolation, can be found in Corollary \ref{cor:BDGopt} below.
\end{remark}

\begin{lemma}\label{lem:UMDtype2}
For a Banach space $X$ the following assertions hold:
\begin{enumerate}
\item If $X$ has type $2$, then also the space $\gamma(L^2(\R_+;H),X)$ has type $2$.
\item If $X$ has UMD, then also the space $\gamma(L^2(\R_+;H),X)$ has UMD.
%\item If $X$ has property $(\alpha)$, then also the space $\gamma(L^2(\R_+;H),X)$ has property $(\alpha)$.
\end{enumerate}
\end{lemma}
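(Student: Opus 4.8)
The plan is to deduce both assertions from a single structural observation: for any separable Hilbert space $\mathcal{H}$, the space $\gamma(\mathcal{H},X)$ is isometrically isomorphic to a \emph{closed} subspace of a Bochner space $L^2(\Omega';X)$ for a suitable probability space $\Omega'$. Once this is in place, both type $2$ and UMD follow from two standard stability facts: these properties are inherited by $L^2(\Omega';X)$ from $X$, and they pass to closed subspaces.

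First I would set up the embedding. Fix an orthonormal basis $(h_m)_{m\ge 1}$ of $\mathcal{H}=L^2(\R_+;H)$ (which is separable since $H$ is) and a sequence $(\gamma_m)_{m\ge 1}$ of independent standard Gaussian random variables on a probability space $(\Omega',\P')$. By the very definition of $\gamma$-radonifying operators, the map $J\colon T\mapsto \sum_{m\ge 1}\gamma_m\,T h_m$ sends $\gamma(\mathcal{H},X)$ into $L^2(\Omega';X)$ and satisfies $\|JT\|_{L^2(\Omega';X)}=\|T\|_{\gamma(\mathcal{H},X)}$. Thus $J$ is a linear isometry, and since $\gamma(\mathcal{H},X)$ is complete, its image $J(\gamma(\mathcal{H},X))$ is a closed subspace of $L^2(\Omega';X)$.

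For assertion (1) I would use that $L^2(\Omega';X)$ has type $2$ whenever $X$ does: for $f_1,\dots,f_N\in L^2(\Omega';X)$ and Rademacher variables $(\e_k)$, Fubini gives
\[
\E_\e\Big\|\sum_k \e_k f_k\Big\|_{L^2(\Omega';X)}^2
=\int_{\Omega'} \E_\e\Big\|\sum_k \e_k f_k(\omega)\Big\|_X^2 \,d\P'(\omega)
\le C^2 \sum_k \|f_k\|_{L^2(\Omega';X)}^2,
\]
where $C$ is the type $2$ constant of $X$. Since type $2$ passes trivially to subspaces (the relevant Rademacher averages are computed with the ambient norm), $\gamma(\mathcal{H},X)$ inherits type $2$ via the embedding of the previous step. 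For assertion (2) I would invoke the well-known fact that $L^p(\Omega';X)$ is UMD for $p\in(1,\infty)$ exactly when $X$ is UMD, together with the fact that UMD is inherited by closed subspaces: a martingale valued in a closed subspace is in particular an $L^2(\Omega';X)$-valued martingale, to which the martingale-transform inequality for $L^2(\Omega';X)$ applies. Both conclusions then follow.

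This argument is essentially an assembly of standard stability results, so there is no deep obstacle; the only point requiring genuine care is the verification that $J$ realises $\gamma(\mathcal{H},X)$ as a truly closed subspace, which is precisely where completeness of $\gamma(\mathcal{H},X)$ and the isometry property of $J$ enter. An alternative, more computational route to (1) would expand the $\gamma$-norm in an orthonormal basis, apply the $\gamma$-Fubini isomorphism $\gamma(\ell^2;\gamma(\mathcal{H},X))\cong\gamma(\ell^2\hat\otimes\mathcal{H};X)$, and reduce to the characterisation of type $2$ in $X$ through sums of independent mean-zero random variables; the subspace approach, however, handles both parts uniformly and is preferable.
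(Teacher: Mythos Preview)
Your proof is correct and follows essentially the same approach as the paper: both rely on the observation that $\gamma(L^2(\R_+;H),X)$ embeds isometrically as a closed subspace of $L^2(\Omega';X)$, and that type $2$ and UMD pass from $X$ to $L^2(\Omega';X)$ and then to closed subspaces. The paper states this in two sentences, citing the stability facts as well-known, whereas you have spelled out the embedding $J$ and the Fubini argument for type $2$ explicitly.
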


\begin{proof}
It is well-known that the assertion holds if the space $\gamma(L^2(\R_+;H),X)$ is replaced by $L^2(\O;X)$. Now the result follows since $\gamma(L^2(\R_+;H),X)$ is isometric to a closed subspace of $L^2(\O;X)$, and therefore inherits the Banach spaces properties type $2$ and UMD.
\end{proof}

\begin{proof}[Proof of Theorem \ref{thm:main1}]
Let $Y = \gamma(L^2(\R), X)$. By \cite{FW} the boundedness of the $H^\infty$-calculus with angle $<\pi/2$, yields the following dilation result:

There are $J\in \calL(X, Y)$, $P\in \calL(Y)$ and $(U(t))_{t\in \R}$ in $\calL(Y)$ such that:
\begin{enumerate}
\item[(i)] There are $c, C\geq 0$ such that for all $x\in X$, one has $c\|x\| \leq \|J x\|_Y\leq C\|x\|$.
\item[(ii)] $P$ is a projection onto $J(X)$.
\item[(iii)] $(U(t))_{t\in \R}$ is a strongly continuous group on $Y$ with $\|U(t)y\|_Y = \|y\|_Y$ for all $y\in Y$.
\item[(iv)] For all $t\geq 0$ one has $J S(t) = P U(t) J$.
\end{enumerate}

Clearly, we have
\begin{align}
\label{eq:JSG}
J S\diamond G(t) = \int_0^t J S(t-s) G(s)\, d W_H(s) = P U(t) \int_0^t U(-s) J G(s)\, d W_H(s),
\end{align}
To see that the latter stochastic integral exists in $Y$, note that $s\mapsto U(-s) J G(s)$ is strongly measurable and adapted, and
\[\|U(-s) J G(s)\|_{L^2(\R_+;\g(H,Y))}\leq C\|G\|_{L^2(\R_+;\g(H,X))}<\infty \ \text{a.s.}\]
Since $Y$ has UMD and type $2$ by Lemma \ref{lem:UMDtype2}, it follows from Proposition \ref{prop:stochinttheorytype2} that $t\mapsto \int_0^t U(-s) J G(s)\, d W_H(s)$ exists and has a version which is a.s.\ pathwise continuous. Therefore, by \eqref{eq:JSG} and the strong continuity of $U(t)$ it follows that $J S\diamond G$ has a version which is a.s.\ pathwise continuous. By (i) also $S\diamond G$ has a version which is a.s.\ pathwise continuous. Moreover, if $G\in L^p(\O;L^2(\R_+;\g(H,X)))$ using \eqref{eq:JSG} and Proposition \ref{prop:stochinttheorytype2} one obtains the following estimate
\begin{align*}
\E \big(\sup_{t\geq 0}\|S\diamond G(t)\|^p\big) &\leq c^{-p} \E \big(\sup_{t\geq 0}\|J S\diamond G(t)\|^p_Y \big)
\\ & = c^{-p} \E\Big(\sup_{t\geq 0} \Big\|P U(t) \int_0^t U(-s) J G(s)\, d W_H(s)\Big\|^p_Y\Big)
\\ & \leq c^{-p} \|P\|^p \E\Big(\sup_{t\geq 0} \Big\|\int_0^t U(-s) J G(s)\, d W_H(s)\Big\|^p_Y\Big)
\\ & \leq c^{-p} \|P\|^p C_{p,Y}^p  \|U(-s) J G(s)\|_{L^p(\O;L^2(\R_+;\g(H,Y)))}^p
\\ & \leq c^{-p} \|P\|^p C_{p,Y}^p C^p \|G\|_{L^p(\O;L^2(\R_+;\g(H,X)))}^p.
\end{align*}
This completes the proof of \eqref{eq:maxest} with $C_1 = c^{-1} C \|P\|$ and $C_2 = C_{p,Y}$.
\end{proof}

It is natural to ask whether one also has exponential estimates (cf. \cite{BrzPes,HauSei2} and references therein) for $\sup_{t\geq 0} \|S\diamond G(t)\|$. This is indeed the case as follows from the next result.

\begin{theorem}\label{thm:exp1}
Assume $X$ is a UMD Banach space with type $2$. Assume condition $(H)$ holds. If $G\in L^0_{\F}(\O;L^2(\R_+;\g(H,X)))$ is such that
for some $M>0$, almost surely
\[\|G\|_{L^2(\R_+;\g(H,X))} \leq \sqrt{M},\]
then for every $R>0$,
\[\P(\sup_{t\geq 0} \|S\diamond G(t)\|\geq \lambda) \leq 2 \exp\Big(-\frac{\lambda^2}{2 e M C_1^2 B_q^2}\Big),\]
where $C_1$ is as in Theorem \ref{thm:main1} and $B_q$ only depends on $q$.
\end{theorem}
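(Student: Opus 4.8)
The plan is to obtain the sub-Gaussian tail from the $L^p$-moment bounds of Theorem~\ref{thm:main1}, exploiting the sharp $\sqrt p$-growth of the stochastic integration constant and then optimizing a Chebyshev estimate over $p$.

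First I would record the relevant moment inequality. Applying Theorem~\ref{thm:main1} to the given $G$ and using the almost sure bound $\|G\|_{L^2(\R_+;\g(H,X))}\le \sqrt M$, one gets for every $p\in(0,\infty)$
\[
\big(\E\sup_{t\ge0}\|S\diamond G(t)\|^p\big)^{1/p}\le C_1\,C_{p,Y}\,\sqrt M,
\]
where $Y=\g(L^2(\R),X)$ is the dilation space appearing in the proof of Theorem~\ref{thm:main1}. The crucial point is the dependence of $C_{p,Y}$ on $p$. Since $X$ is UMD with type $2$, so is $Y$ by Lemma~\ref{lem:UMDtype2}; hence $Y$ has martingale type $2$ and may be renormed to be $2$-smooth, so Remark~\ref{rem:optimal} provides a constant, which we denote $B_q$, with $C_{p,Y}\le B_q\sqrt p$ for all $p\ge2$. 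Writing $Z:=\sup_{t\ge0}\|S\diamond G(t)\|$ and $K:=C_1 B_q\sqrt M$, this reads
\[
(\E Z^p)^{1/p}\le K\sqrt p\qquad(p\ge2).
\]

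Next I would turn these moments into a tail bound. Chebyshev's inequality gives, for every $p\ge2$,
\[
\P(Z\ge\lambda)\le\lambda^{-p}\,\E Z^p\le\Big(\frac{K\sqrt p}{\lambda}\Big)^{p}.
\]
The right-hand side is minimized at $p^\ast=\lambda^2/(eK^2)$, where $K\sqrt{p^\ast}/\lambda=e^{-1/2}$, so that
\[
\Big(\frac{K\sqrt{p^\ast}}{\lambda}\Big)^{p^\ast}=\exp\Big(-\frac{\lambda^2}{2eK^2}\Big)=\exp\Big(-\frac{\lambda^2}{2eMC_1^2B_q^2}\Big),
\]
which is precisely the exponent in the statement. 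This choice of $p$ is admissible as long as $p^\ast\ge2$, that is, for $\lambda\ge\sqrt{2e}\,K$.

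The remaining range $\lambda<\sqrt{2e}\,K$ is where the leading factor $2$ is used: either $2\exp(-\lambda^2/(2eK^2))\ge1$ and the bound is trivial because $\P(Z\ge\lambda)\le1$, or one inserts the boundary exponent $p=2$ into the Chebyshev estimate and checks the elementary inequality $2K^2/\lambda^2\le2\exp(-\lambda^2/(2eK^2))$ over the residual interval. I expect the only substantive ingredient to be the sharp $\sqrt p$-behavior of $C_{p,Y}$ furnished by Remark~\ref{rem:optimal}; granting that, the rest is the standard Chernoff-type optimization, and the single point needing care is the constraint $p\ge2$ together with the small-$\lambda$ bookkeeping that the factor $2$ absorbs.
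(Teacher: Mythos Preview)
Your argument is correct and rests on the same key ingredient as the paper's proof: the $\sqrt p$-growth of the constant $C_{p,Y}$ from Remark~\ref{rem:optimal} applied to the dilation space $Y$, combined with the almost sure bound on $\|G\|$ to turn the maximal estimate into a uniform moment bound $(\E Z^p)^{1/p}\le K\sqrt p$.

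Where you diverge from the paper is in how the sub-Gaussian tail is extracted from these moments. The paper expands the exponential moment
\[
\E\exp\big(\epsilon\,\textstyle\sup_{t\ge0}\|S\diamond G(t)\|^2\big)=\sum_{n\ge0}\frac{\epsilon^n}{n!}\,\E Z^{2n},
\]
bounds each term using $(\E Z^{2n})\le(C_1 C_Y)^{2n}(2n)^n M^n$ and $n!\ge n^n e^{-n}$, sums the resulting geometric series at $\epsilon=(2eMC_1^2C_Y^2)^{-1}$ to obtain $\E e^{\epsilon Z^2}\le2$, and then applies Markov's inequality to $e^{\epsilon Z^2}$. You instead apply Chebyshev directly to $Z^p$ and optimise over $p$, landing on the same exponent; the residual case $p^\ast<2$ is handled by the factor $2$ exactly as you sketch (the elementary check $2K^2/\lambda^2\le 2e^{-\lambda^2/(2eK^2)}$ on the interval $\sqrt{2e\log 2}\,K\le\lambda<\sqrt{2e}\,K$ does go through). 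Your route is marginally more elementary in that it avoids the series summation and the Stirling-type bound, while the paper's route has the bonus of establishing the stronger intermediate fact that $\E\exp(\epsilon Z^2)<\infty$ for small $\epsilon$. Both are standard Chernoff-type arguments and neither offers a real advantage in this setting.
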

\begin{proof}
It follows from Remark \ref{rem:optimal} and Lemma \ref{lem:UMDtype2} that $C_{p,Y}\leq C_Y \sqrt{p}$ for $p\geq 2$. Therefore, we can conclude that $C_2$ from \eqref{eq:maxest} satisfies $C_2 = C_{p,Y}\leq C_Y \sqrt{p}$ for all $p\in [2, \infty)$.
Using power series argument as in \cite{HauSei2} it follows from \eqref{eq:maxest} that for any $\epsilon>0$
\begin{align*}
\E \exp(\epsilon \sup_{t\geq 0} \|S\diamond G(t)\|^2)  & = \sum_{n\geq 0} \E \sup_{t\geq 0} \frac{\epsilon^{n}\|S\diamond G(t)\|^{2n}}{n!}
 \leq \sum_{n\geq 0} C_1^{2n} C_2^{2n} M^{n} \frac{\epsilon^{n}}{n!}
\\ & = \sum_{n\geq 0} C_1^{2n} C_Y^{2n} (2n)^n M^{n} \frac{\epsilon^{n}}{n!}
 \leq \sum_{n\geq 0} C_1^{2n} C_Y^{2n} 2^n M^{n} \epsilon^{n} e^{n} =:I,
\end{align*}
where we used $n!\geq n^n e^{-n}$ in the last step. Clearly, the above expression $I = 2$ for $\epsilon = 2^{-1} e^{-1} M^{-1} C_1^{-2} C_Y^{-2}$.
The exponential estimate now follows from:
\begin{align*}
\P(\sup_{t\geq 0} \|S\diamond G(t)\|)\geq \lambda) &= \P\big(\exp(\epsilon\sup_{t\geq 0} \|S\diamond G(t)\|^2)\geq \exp(\epsilon \lambda^2)\big) \\ & \leq  e^{-\epsilon \lambda^2}\E \exp(\epsilon \sup_{t\geq 0} \|S\diamond G(t)\|^2)\leq 2 e^{-\epsilon \lambda^2}.
\end{align*}

\end{proof}
In the next section we present an entirely different approach to exponential tail estimates based on \cite{BrzPes}, which has the advantage that we do not need to have optimal estimates as $p \to \infty$. A disadvantage is that it is more difficult and the constants in the exponential estimate are less explicit.

\section{Alternative approach to Theorem \ref{thm:main1}\label{sec:alt}}

In this section we present an alternative approach to obtain a version of Theorem \ref{thm:main1} with a slightly different assumption on the geometry of the Banach space $X$ taken from \cite{BrzPes}. For $r\in [2, \infty)$ consider the following condition on $X$:

\let\ALTERWERTA\theenumi
\let\ALTERWERTB\labelenumi
\def\theenumi{(C$_r$)}
\def\labelenumi{\rm (C$_r$)}
\begin{enumerate}
\item\label{hyp:Cr} The function $\phi:X\to \R$ defined by $\phi(x) = \|x\|^r$ is two times continuously Fr\'echet differentiable and there are constants $k_1, k_2>0$ such that
\begin{equation}\label{eq:BrzPes}
\|\phi'(x)\|\leq k_1\|x\|^{r-1} \ \ \text{and} \ \ \|\phi''(x)\|\leq k_2 \|x\|^{r-2}.
\end{equation}
\end{enumerate}
\let\theenumi\ALTERWERTA
\let\labelenumi\ALTERWERTB

If \ref{hyp:Cr} holds for some $r\in [2, \infty)$, then one can show that (C$_{s}$) holds for all $s\geq r$. In particular, for $X=L^p$ with $p\in [2, \infty)$,  \ref{hyp:Cr} holds for all $r\in [p, \infty)$. Furthermore let us note that (C$_2$) can only hold for spaces which are isomorphic to a Hilbert space (see \cite[Fact 1.0 in V.I]{DGZ}). In particular, (C$_2$) does not hold for any $X = L^p$ with $p\in (2, \infty)$.

These estimates \eqref{eq:BrzPes} are the ones used in \cite{BrzPes} in order to obtain results on path-continuity under the additional assumption that $S(t)$ is a contraction semigroup.

The following result will allow us to relate our setting Theorem \ref{thm:main1} to the setting in \cite{BrzPes}.
\begin{proposition}\label{prop:Scontraction}
Let $X$ be a Banach space which satisfies \ref{hyp:Cr} for some $r\in [2, \infty)$. Assume $A$ satisfies hypothesis $(H)$. Then there exists an equivalent norm $\nnn\cdot\nnn$ on $X$ for which $X$ also satisfies \ref{hyp:Cr} with the same constants, and $S(t)$ is a contraction semigroup.
\end{proposition}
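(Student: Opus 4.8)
The plan is to construct the equivalent norm directly from the semigroup by an averaging/renorming procedure, exploiting that condition $(H)$ gives an analytic semigroup which is uniformly bounded on the sector. The key observation is that under $(H)$ the operator $-A$ generates a bounded analytic semigroup, and in fact the boundedness of the $H^\infty$-calculus of angle $<\pi/2$ yields that $S(t)=e^{tA}$ is uniformly bounded: $\sup_{t\geq 0}\|S(t)\|=:K<\infty$. First I would define the candidate norm by
\[
\nnn x \nnn := \sup_{t\geq 0}\|S(t)x\|.
\]
This is clearly a seminorm; it is a norm equivalent to $\|\cdot\|$ because $\|x\|=\|S(0)x\|\leq \nnn x\nnn \leq K\|x\|$. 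The crucial point is that $\nnn\cdot\nnn$ makes $S(t)$ a contraction: for $s\geq 0$ one has $\nnn S(s)x\nnn = \sup_{t\geq 0}\|S(t)S(s)x\| = \sup_{t\geq s}\|S(t)x\|\leq \sup_{t\geq 0}\|S(t)x\| = \nnn x\nnn$, using the semigroup law and a shift of the supremum variable.

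The remaining work is to verify that the new norm still satisfies \ref{hyp:Cr} \emph{with the same constants} $k_1,k_2$. Here I would define $\psi(x):=\nnn x\nnn^r$ and relate its Fr\'echet derivatives to those of $\phi$. Since $\nnn\cdot\nnn$ is an orbit-supremum, the natural route is to write $\psi = \phi\circ T$ where $T$ is not a single linear map but a supremum, so a direct chain rule is unavailable; this is the main obstacle. To circumvent it I expect one must realize $\nnn\cdot\nnn$ as the norm of an \emph{isometric embedding} into a space where condition \ref{hyp:Cr} is inherited. Concretely, consider the map $\iota:X\to C_0([0,\infty);X)$ (or a suitable $L^\infty$-type space of orbits), $\iota(x)=(S(t)x)_{t\geq 0}$, so that $\nnn x\nnn$ is the sup-norm of $\iota(x)$. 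The difficulty is that sup-norm spaces do \emph{not} in general inherit \ref{hyp:Cr}; smoothness of $\|\cdot\|^r$ is fragile under passage to $L^\infty$ or $C_0$.

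The resolution I anticipate, matching the spirit of \cite{BrzPes}, is to replace the supremum by an \emph{integral average} that both dominates the orbit and preserves smooth renorming. One would fix a suitable probability-type weight and set, for instance,
\[
\nnn x \nnn^r := \int_0^\infty \|S(t)x\|^r \, d\mu(t)
\]
for an appropriately chosen measure $\mu$ (or a discrete average over a dilation orbit as provided by \cite{FW}), chosen so that $\nnn\cdot\nnn$ is equivalent to $\|\cdot\|$ and contractive for $S$. With this choice the smoothness passes through: $\psi(x)=\int_0^\infty \phi(S(t)x)\,d\mu(t)$ is twice Fr\'echet differentiable by differentiating under the integral, with $\psi'(x)h = \int_0^\infty \phi'(S(t)x)S(t)h\,d\mu(t)$ and similarly for $\psi''$; the bounds \eqref{eq:BrzPes} for $\phi$ then yield the same bounds for $\psi$ once one checks the averaged inequality
\[
\int_0^\infty \|S(t)x\|^{r-1}\|S(t)h\|\,d\mu(t)\leq k_1 \nnn x\nnn^{r-1}\nnn h\nnn
\]
via H\"older's inequality with exponents $r'/(r-1)$ and $r$, and the analogous estimate for the second derivative. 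The genuinely delicate part is arranging the measure $\mu$ so that the resulting norm is simultaneously (a) equivalent to $\|\cdot\|$, (b) contractive for $S(t)$, and (c) reproduces the constants $k_1,k_2$ \emph{exactly}; the contractivity in (b) typically forces $\mu$ to be invariant under the shift $t\mapsto t+s$ up to a decreasing factor, which must be balanced against integrability for (a).
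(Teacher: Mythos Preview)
Your proposal correctly identifies the structure of the argument---embed $X$ into an $L^r$-type space so that $(C_r)$ transfers with the same constants via H\"older, then arrange the embedding so that the semigroup becomes a contraction---but it does not close. The sup-norm attempt is rightly discarded. For the integral version $\nnn x\nnn^r = \int_0^\infty \|S(t)x\|^r\,d\mu(t)$, however, you cannot simultaneously achieve (a), (b), and (c): contractivity forces $\mu$ essentially to be translation-monotone (e.g.\ Lebesgue), but then $\int_0^\infty \|S(t)x\|^r\,dt$ generally diverges since $\|S(t)x\|$ is only bounded, not decaying; conversely any finite $\mu$ (such as $e^{-\lambda t}\,dt$) destroys contractivity because the shift produces the factor $e^{\lambda s}$. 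You flagged this tension but did not resolve it, and it is not resolvable with norms of the form $\int \|S(t)x\|^r\,d\mu$.

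The paper's proof supplies exactly the missing ingredient: instead of integrating $\|S(t)x\|^r$, one uses the \emph{square function norm} associated with the $H^\infty$-calculus,
\[
\nnn x\nnn := \big\| t\mapsto (-A)^{1/2} S(t)x \big\|_{\gamma_r(\R_+;X)},
\]
where $\gamma_r(\R_+;X)$ is the $\gamma$-radonifying space computed with $L^r(\Omega;X)$-norms of the Gaussian sums. The equivalence $\nnn x\nnn \simeq \|x\|$ is precisely the Kalton--Weis square function characterisation of a bounded $H^\infty$-calculus, so hypothesis $(H)$ is used here in an essential (and nontrivial) way, not merely to get uniform boundedness of $S$. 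Contractivity is then immediate from the left-ideal property of $\gamma_r$: applying $S(s)$ shifts the function $t\mapsto (-A)^{1/2}S(t)x$ to $[s,\infty)$, and restriction from $\R_+$ to $[s,\infty)$ is a contraction in $\gamma_r$. Finally, $(C_r)$ with the same constants holds because $\gamma_r(\R_+;X)$ sits isometrically inside $L^r(\Omega;X)$, and your H\"older computation applies verbatim there.
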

\begin{proof}
By hypothesis $(H)$ and \cite{KaWe} we can define the following equivalent norm on $X$:
\[\nnn x\nnn= \|t\mapsto (-A)^{1/2} S(t)x\|_{\g_r(\R_+;X)}.\]
Here for $r=2$, $\g_2(\R_+;X) = \g(\R_+;X)$ is as in \cite{NVW1, NVW3}. For $r\in (2, \infty)$, $\g_r(\R_+;X)$ is defined using $L^r(\O;X)$-norms of the corresponding Gaussian sums (which are all equivalent by the Kahane--Khintchine inequalities). Here $(\O, \mathcal{A}, \P)$ is a probability space.
We claim that $L^r(\O;X)$ satisfies \ref{hyp:Cr} with the same constants $k_1$ and $k_2$. Indeed, one can show that the function $\psi:X\to \R$ given by $\psi(y) = \|y\|_{L^r(\O;X)}^r$ satisfies
\begin{align*}
\psi'(y)v &= \int_\O \phi'(y(\omega)) u(\omega) \, d\P(\omega), \ \ u\in L^r(\O;X),
\\ \psi''(y)(u,v) &= \int_\O \phi''(y(\omega)) (u(\omega),v(\omega)) \, d\P(\omega), \ \ u,v\in L^r(\O;X).
\end{align*}
Now the claim follows from H\"older's inequality and the assumption on $X$.

As $\g_r(\R_+;X)$ is a closed subspace of $L^r(\O;X)$, it also satisfies \ref{hyp:Cr} with constants $k_1$ and $k_2$. Finally, $S$ is a contraction semigroup in $(X, \nnn\cdot\nnn)$, since
\begin{align*}
\nnn S(s)x\nnn & = \|t\mapsto (-A)^{1/2} S(t+s) x\|_{\g_r(\R_+;X)}
\\ & = \|t\mapsto (-A)^{1/2} S(t) x\|_{\g_r([s, \infty);X)}\leq \nnn x\nnn,
\end{align*}
where we used the left ideal property in $\gamma_r(\R_+;X)$ in the last line.
\end{proof}

As a consequence we obtain the following result.
\begin{theorem}\label{thm:main3}
Let $X$ be a Banach space with satisfies condition \ref{hyp:Cr}. Assume condition $(H)$ holds. Then for all $G\in L^0_{\F}(\O;L^2(\R_+;\g(H,X)))$ the process $S\diamond G$ has a version with continuous paths. Moreover, for all $p\in (0,\infty)$, the following maximal estimate holds:
\begin{equation}\label{eq:maxest3}
\big(\E\sup_{t\geq 0}\|S\diamond G(t)\|^p\big)^{1/p} \leq C_1 C_{2} (\E\|G\|_{L^2(\R_+;\g(H,X))}^p)^{1/p},
\end{equation}
where $C_1$ depends on $A$, and $C_2$ depends on $X$ and $p$.
\end{theorem}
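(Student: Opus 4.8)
The plan is to reduce the statement to the contraction-semigroup situation treated in \cite{BrzPes}, using the renorming furnished by Proposition \ref{prop:Scontraction}. First I would apply that proposition to obtain an equivalent norm $\nnn\cdot\nnn$ on $X$ for which $S(t)$ is a contraction semigroup while \ref{hyp:Cr} still holds with the \emph{same} constants $k_1, k_2$. Fix $c, C>0$ with $c\|x\|\le \nnn x\nnn\le C\|x\|$ for all $x\in X$; these depend only on $A$ and will ultimately account for the constant $C_1$.

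Next I would check that replacing $\|\cdot\|$ by the equivalent norm $\nnn\cdot\nnn$ changes the $\gamma$-radonifying norm by the same factors. Directly from the Gaussian-sum definition of $\|\cdot\|_{\g(H,X)}$ one gets
\[c\,\|T\|_{\g(H,X)}\le \nnn T\nnn_{\g(H,X)}\le C\,\|T\|_{\g(H,X)}, \qquad T\in\g(H,X),\]
so that $\g(H,X)$, and hence $L^2(\R_+;\g(H,X))$ and the class $L^0_{\F}(\O;L^2(\R_+;\g(H,X)))$, are unchanged as sets, with the two norms equivalent with the same constants $c, C$. In particular the hypotheses on $G$ are insensitive to the choice of norm on $X$.

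The core step is then to invoke the path-continuity and maximal inequality of \cite{BrzPes} in the space $(X,\nnn\cdot\nnn)$: since the semigroup is now contractive and $X$ satisfies \ref{hyp:Cr} with constants $k_1, k_2$, that result produces a version of $S\diamond G$ with continuous paths and an estimate
\[\big(\E\sup_{t\ge0}\nnn S\diamond G(t)\nnn^p\big)^{1/p}\le C_2\,\big(\E\nnn G\nnn_{L^2(\R_+;\g(H,X))}^p\big)^{1/p},\]
with $C_2$ depending only on $r, k_1, k_2$ and $p$, i.e.\ on $X$ and $p$. Having a continuous version with respect to $\nnn\cdot\nnn$ is equivalent to having one with respect to $\|\cdot\|$, which disposes of the continuity assertion.

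Finally I would pass back to the original norm: using $c\|x\|\le\nnn x\nnn$ on the left-hand side and $\nnn T\nnn_{\g(H,X)}\le C\|T\|_{\g(H,X)}$ on the right-hand side yields \eqref{eq:maxest3} with $C_1=c^{-1}C$. I do not anticipate a serious obstacle, since the substantive work is already contained in Proposition \ref{prop:Scontraction}; the only points needing care are (i) verifying that \cite{BrzPes} supplies the full $L^p$ maximal estimate and not merely pathwise continuity, and (ii) the bookkeeping that confines the $A$-dependence to $C_1$ and the geometric ($X$- and $p$-) dependence to $C_2$, just as in the proof of Theorem \ref{thm:main1}.
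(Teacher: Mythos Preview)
Your approach coincides with the paper's: renorm via Proposition \ref{prop:Scontraction} so that $S$ becomes contractive while condition \ref{hyp:Cr} persists with the same constants, then apply \cite{BrzPes} in the new norm and switch back. The one point that needs more than you indicate is precisely your caveat (i).

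The maximal inequality \cite[(1.2)]{BrzPes} is an $L^p$-estimate tied to condition $(C_p)$; assuming only \ref{hyp:Cr} with $r$ fixed does not directly give the estimate for an arbitrary exponent $p$. The paper therefore splits into two cases. For $p\ge r$ it first upgrades \ref{hyp:Cr} to $(C_p)$ (using the remark preceding Proposition \ref{prop:Scontraction} that $(C_r)\Rightarrow(C_s)$ for all $s\ge r$), then applies Proposition \ref{prop:Scontraction} with exponent $p$ rather than $r$, so that \cite[(1.2)]{BrzPes} yields the $L^p$-estimate immediately. For $0<p<r$ one cannot obtain $(C_p)$, and the estimate is deduced from the case $p=r$ by a standard Lenglart stopping-time argument. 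Apart from this case distinction (and the resulting constants $C_1=Bb^{-1}$, $C_2=K$ in the paper's notation), your argument and the paper's are the same.
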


\begin{proof}
First let $p\geq r$. Then $X$ satisfies (C$_p$). By Proposition \ref{prop:Scontraction} we can find an equivalent norm $\nnn\cdot\nnn$ on $X$ which satisfies (C$_p$) and for which $S$ is a contraction semigroup. Let $b, B>0$ be such that $b\nnn x\nnn \leq \|x\|\leq B\nnn x\nnn$. By \cite[Theorem 1.1]{BrzPes} we obtain a version with continuous paths. Moreover, by \cite[(1.2)]{BrzPes} we can find a constant $K$ depending on the constants in \eqref{eq:BrzPes} and $p$ such that
\begin{align*}
\big(\E\sup_{t\geq 0}\|S\diamond G(t)\|^p\big)^{1/p} &\leq B\big(\E\sup_{t\geq 0}\nnn S\diamond G(t)\nnn ^p\big)^{1/p}
\\ & \leq B K  (\E\nnn G\nnn_{L^2(\R_+;\g(H,X))}^p)^{1/p} \\ & \leq B K b^{-1}  (\E\|G\|_{L^2(\R_+;\g(H,X))}^p)^{1/p}
\end{align*}
This proves the result with $C_1 = B b^{-1}$ and $C_2 = K$ for $p\geq r$. For $0<p<r$, the result follows from a standard application of Lenglart's stopping time argument (see \cite{Lenglart}).
\end{proof}

One drawback of the above approach is that the constant $C_2$ that comes from the proof of \cite[Theorem 1.1]{BrzPes} is somewhat complicated and probably not optimal.
On the other hand by \cite[Theorem 1.2]{BrzPes} we immediately get exponential estimates.
\begin{theorem}\label{thm:exptail}
Let $X$ be a Banach space with satisfies condition \ref{hyp:Cr}. Assume condition $(H)$ holds. If $G\in L^0_{\F}(\O;L^2(\R_+;\g(H,X)))$ is such that
for some $M>0$, almost surely
\[\|G\|_{L^2(\R_+;\g(H,X))} \leq \sqrt{M},\]
then for every $R>0$,
\[\P(\sup_{t\geq 0} \|S\diamond G(t)\|\geq R) \leq 3 \exp\Big(-\frac{R^2}{C_1 C_2 M}\Big).\]
where $C_1$ depends on $A$ and $C_2$ depends on $X$.
\end{theorem}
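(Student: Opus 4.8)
The plan is to mirror the proof of Theorem \ref{thm:main3}: reduce to the contraction case by renorming, and then invoke the exponential tail estimate \cite[Theorem 1.2]{BrzPes}, which is tailored to contraction semigroups satisfying condition \ref{hyp:Cr}. First I would apply Proposition \ref{prop:Scontraction} to obtain an equivalent norm $\nnn\cdot\nnn$ on $X$ under which condition \ref{hyp:Cr} holds with the \emph{same} constants $k_1,k_2$ and for which $S$ becomes a contraction semigroup. Fix $b,B>0$ with $b\nnn x\nnn\leq\|x\|\leq B\nnn x\nnn$ for all $x\in X$; these constants arise from the renorming and hence depend on $A$, and they are what will eventually be packaged into $C_1$.

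Next I would transport the assumption on $G$ into the new norm. Since the $\g$-radonifying norm depends on the target space only through its norm, the bound $\nnn x\nnn\leq b^{-1}\|x\|$ gives $\nnn G\nnn_{L^2(\R_+;\g(H,X))}\leq b^{-1}\sqrt{M}$ almost surely, so $G$ satisfies the hypothesis of \cite[Theorem 1.2]{BrzPes} in $(X,\nnn\cdot\nnn)$ with $M$ replaced by $b^{-2}M$. Applying that theorem, which yields a constant $C$ depending only on $k_1,k_2$ and $r$, I obtain for every $\rho>0$
\[\P\big(\sup_{t\geq 0}\nnn S\diamond G(t)\nnn\geq\rho\big)\leq 3\exp\Big(-\frac{\rho^2}{C b^{-2}M}\Big).\]
Finally I would pass back to the original norm: from $\|S\diamond G(t)\|\leq B\nnn S\diamond G(t)\nnn$ the event $\{\sup_t\|S\diamond G(t)\|\geq R\}$ is contained in $\{\sup_t\nnn S\diamond G(t)\nnn\geq R/B\}$, so choosing $\rho=R/B$ produces
\[\P\big(\sup_{t\geq 0}\|S\diamond G(t)\|\geq R\big)\leq 3\exp\Big(-\frac{R^2}{C B^2 b^{-2}M}\Big),\]
which is the asserted estimate with $C_1=B^2 b^{-2}$ (depending on $A$) and $C_2=C$ (depending on $X$).

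The step I expect to require the most care is bookkeeping rather than genuine analysis: one must verify that the constant in \cite[Theorem 1.2]{BrzPes} depends only on $k_1,k_2,r$, so that the final rate factors as $C_1C_2$ with $C_1$ carrying the $A$-dependence (via $b,B$) and $C_2$ carrying the $X$-dependence. The preservation of \ref{hyp:Cr} under the renorming and the contractivity of $S$ are supplied verbatim by Proposition \ref{prop:Scontraction}, and the two translations between $\|\cdot\|$ and $\nnn\cdot\nnn$ are routine, so no further difficulty is anticipated.
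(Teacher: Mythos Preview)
Your proposal is correct and follows essentially the same argument as the paper: renorm via Proposition \ref{prop:Scontraction}, transfer the a.s.\ bound on $G$ to the new norm (picking up $b^{-2}$), apply \cite[Theorem 1.2]{BrzPes} in $(X,\nnn\cdot\nnn)$, and translate back via the inclusion $\{\sup_t\|S\diamond G(t)\|\geq R\}\subset\{\sup_t\nnn S\diamond G(t)\nnn\geq R/B\}$, arriving at $C_1=B^2b^{-2}$ and $C_2=K$ exactly as in the paper.
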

\begin{proof}
Let $\nnn\cdot\nnn$ be as in the proof of Theorem \ref{thm:main3}. We write $\tilde{X}$ for $X$ with the norm $\nnn\cdot\nnn$. Then
\[\|G\|_{L^2(\R_+;\g(H,\tilde{X}))}^2 \leq b^{-2}\|G\|_{L^2(\R_+;\g(H,X))}^2 \leq b^{-2} M.\]
Therefore, \cite[Theorem 1.2]{BrzPes} implies that there is a constant $K>0$ depending on $p$ and $X$ such that
\begin{align*}
\P(\sup_{t\geq 0} \|S\diamond G\|\geq R) &\leq \P(\sup_{t\geq 0} \nnn S\diamond G(t)\nnn\geq R/B)
\\ & \leq 3 \exp\Big(-\frac{R^2}{B^2 b^{-2} K M}\Big).
\end{align*}
The result follows with $C_1 = B^2 b^{-2}$ and $C_2 = K$.
\end{proof}

\section{Extensions of the results for spaces with property $(\alpha)$\label{sec:alpha}}

In this section we present a result which does not require the type $2$ assumption on the Banach space $X$. However, we do assume $X$ is a UMD space. In this setting the space of integrable processes is described by the space $L^0_{\F}(\O;\g(\R_+;H,X))$ and one has the following (see \cite{NVW1} for details):
\begin{proposition}\cite[Theorems 5.9, 5.12]{NVW1}\label{prop:NVW}
Let $E$ be a UMD Banach space and let $p\in (0,\infty)$ be fixed. For an adapted process $\Phi:\R_+\times\O\to \calL(H,X)$ the following are equivalent:
\begin{enumerate}
\item The process $\Phi$ is stochastically integrable with respect to $W_H$.
\item $\Phi(\cdot, \omega)\in \g(\R_+;H,X)$ for a.e.\ $\omega\in\Omega$.
\end{enumerate}

In this situation we have $t\mapsto \int_0^{t} \Phi\,dW_H$ is a.s.\ pathwise continuous. Furthermore, for all $p\in (0, \infty)$, there exists constants $c_{p,X}^{\g},C_{p,X}^{\g}>0$ such that
\[
c_{p,X}^{\g}\E\n \Phi\n_{\g(0,T;H,X)}^p \leq  \E \sup_{t\in [0,T]}\Big\n \int_0^t \Phi\,dW_H\Big\n^p \leq C_{p,X}^{\g}
\E\n \Phi\n_{\g(0,T;H,X)}^p.
\]
\end{proposition}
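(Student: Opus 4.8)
The plan is to follow the standard two-step route for stochastic integration in UMD spaces: first prove everything for adapted finite-rank step processes via a decoupling argument, then extend to general integrands by approximation, using a two-sided $L^p$-estimate to pin down the integrable class. So first I would reduce to \emph{simple} processes. For an adapted process of the form $\Phi = \sum_m \one_{(t_{m-1},t_m]}\otimes\big(\sum_k h_k\otimes \xi_{mk}\big)$ with each $\xi_{mk}$ bounded and $\F_{t_{m-1}}$-measurable, the integral is an explicit finite sum $\int_0^T \Phi\,dW_H = \sum_{m,k}\xi_{mk}\,\big(W_H(t_m)h_k - W_H(t_{m-1})h_k\big)$, a sum of products of $\F_{t_{m-1}}$-measurable random variables with independent Brownian increments; its paths are trivially continuous.

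The key step is the decoupling inequality, and this is where UMD enters. By the randomization/decoupling results of Garling and McConnell and their vector-valued refinements, one has a two-sided comparison
\[
\E\Big\|\int_0^T \Phi\,dW_H\Big\|^p \eqsim_{p,X} \E\Big\|\int_0^T \Phi\,d\widetilde W_H\Big\|^p,
\]
where $\widetilde W_H$ is an independent copy of $W_H$; the UMD property is precisely what delivers \emph{both} inequalities (one direction costs the UMD constant, the other its reciprocal). For the decoupled integral, conditionally on $\Phi$ the random variable $\int_0^T \Phi\,d\widetilde W_H$ is a centered Gaussian vector whose law is that of $\sum_n \g_n \Phi f_n$ for an orthonormal basis $(f_n)$ of $L^2(0,T;H)$, so by the very definition of the $\g$-radonifying norm,
\[
\Big(\E_{\widetilde W}\Big\|\int_0^T \Phi\,d\widetilde W_H\Big\|^2\Big)^{1/2} = \|\Phi\|_{\g(0,T;H,X)} \quad\text{a.s.}
\]
By Kahane--Khintchine the $L^p$-moments of a Gaussian are all comparable to the second moment for every $p\in(0,\infty)$, so taking full expectations and combining with the decoupling comparison yields the two-sided estimate for simple processes, without the supremum.

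To place the supremum inside, I would use that $t\mapsto \int_0^t \Phi\,dW_H$ is an $X$-valued martingale, whence $t\mapsto \|\int_0^t \Phi\,dW_H\|$ is a real submartingale and Doob's $L^p$-inequality (for $p>1$, extended to $p\le 1$ by Lenglart's inequality as in the remark after Proposition \ref{prop:stochinttheorytype2}) upgrades the fixed-time estimate to the maximal one. Finally I would extend to general $\Phi$ by approximation: the two-sided estimate shows that $\Phi\mapsto \int\Phi\,dW_H$ is an isomorphism, up to $p$- and $X$-dependent constants, from the $\g$-closure of the simple processes into $L^p(\O;C([0,T];X))$. One shows the simple processes are dense, after a truncation/localization argument, among adapted $\Phi$ with $\Phi(\cdot,\om)\in\g(\R_+;H,X)$ a.s., and that this $\g$-membership is exactly the criterion for integrability, the converse implication once more invoking the lower bound and hence UMD. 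Passing to the limit, the approximating continuous martingales converge uniformly in $t$ almost surely along a subsequence, producing a pathwise continuous version, and the two-sided bound survives the limit.

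The main obstacle is the decoupling inequality together with its interaction with the $\g$-norm: securing \emph{both} directions of the comparison with $\widetilde W_H$ is exactly the point at which UMD is indispensable, and verifying that the decoupled Gaussian integral \emph{reproduces} the $\g$-norm rather than merely bounding it is what makes the estimate two-sided and yields the clean characterization of the integrable class.
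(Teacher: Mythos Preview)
The paper does not give its own proof of this proposition: it is quoted from \cite[Theorems 5.9, 5.12]{NVW1}, with the sole added remark that the range $0<p\le 1$ follows from Lenglart's inequality. Your sketch is essentially the argument carried out in \cite{NVW1}: decoupling via the UMD property to compare with an independent copy of $W_H$, identification of the decoupled integral as a conditionally Gaussian vector whose covariance produces the $\gamma$-norm, Kahane--Khintchine to pass between moments, Doob's maximal inequality to insert the supremum, and approximation by adapted finite-rank step processes. You also correctly flag Lenglart for $p\le 1$, which is exactly the paper's only additional comment. So there is nothing to compare; your approach coincides with the cited one.
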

The case $0<p\leq 1$ was not considered in \cite{NVW1}, but can easily be obtained by an application of Lenglart's inequality.

In the following result we need that the Banach space $X$ has the so-called property $(\alpha)$ (see \cite{Pialpha} for details). Examples of UMD spaces with property $(\alpha)$ are $X=L^q$ with $q\in (1, \infty)$ or any space which is isomorphic to a closed subspace of $L^q$ with $q\in (1, \infty)$.

\begin{theorem}\label{thm:main2}
Let $X$ be a UMD space with property $(\alpha)$. Assume $A$ satisfies hypothesis $(H)$. Then for all $G\in L^0_{\F}(\O;\g(\R_+;H,X))$ the process $S\diamond G$ has a version with continuous paths. Moreover, for all $p\in (0,\infty)$, the following maximal estimate holds:
\begin{equation}\label{eq:maxest2}
\big(\E\sup_{t\geq 0}\|S\diamond G(t)\|^p\big)^{1/p} \leq C_1 C_{2} (\E\|G\|_{\gamma(\R_+;H,X)}^p)^{1/p},
\end{equation}
where $C_1$ depends on $A$ and $w$, and $C_2$ depends on $X$ and $p$.
\end{theorem}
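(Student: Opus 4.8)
The plan is to mimic the dilation argument used in the proof of Theorem \ref{thm:main1}, but now working with the $\gamma$-norm integration theory of Proposition \ref{prop:NVW} instead of the type $2$ theory, and to exploit property $(\alpha)$ exactly where type $2$ was used before. As in the earlier proof, I would set $Y = \gamma(L^2(\R),X)$ and invoke the dilation theorem of \cite{FW}, which is available since $(H)$ gives a bounded $H^\infty$-calculus of angle $<\pi/2$: this produces $J\in\calL(X,Y)$, a projection $P\in\calL(Y)$ onto $J(X)$, and an isometric strongly continuous group $(U(t))_{t\in\R}$ on $Y$ satisfying $JS(t) = PU(t)J$ for $t\geq 0$. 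The same computation \eqref{eq:JSG} then gives
\[
J\, S\diamond G(t) = P U(t) \int_0^t U(-s)\, J\, G(s)\, dW_H(s),
\]
so that path-continuity and the maximal estimate for $S\diamond G$ follow from the corresponding statements for the dilated integrand $s\mapsto U(-s)JG(s)$, using $(\textrm{i})$ and the boundedness of $P$ and $U$.

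The crucial point is to verify that the dilated space $Y$ is again a UMD space with property $(\alpha)$, so that Proposition \ref{prop:NVW} applies to the $Y$-valued stochastic integral. UMD is inherited exactly as in Lemma \ref{lem:UMDtype2}, since $\gamma(L^2(\R),X)$ embeds isometrically into an $L^2$-space over $X$. For property $(\alpha)$ I would argue analogously: property $(\alpha)$ passes to closed subspaces and is stable under forming $\gamma$-spaces $\gamma(L^2(\R),X)$ when $X$ has property $(\alpha)$, so the step would be to state and prove (or cite) the permanence of property $(\alpha)$ under $X\mapsto\gamma(L^2(\R),X)$, parallel to Lemma \ref{lem:UMDtype2}. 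Granting this, $Y$ is UMD with property $(\alpha)$, and Proposition \ref{prop:NVW} yields existence, continuity, and the two-sided $\gamma$-norm estimate for the integral of $U(-s)JG(s)$ in $Y$.

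The remaining ingredient, and the step I expect to be the main obstacle, is controlling the $\gamma$-norm of the dilated integrand. In the type $2$ setting one had the simple pointwise-in-$s$ bound $\|U(-s)JG(s)\|_{L^2(\R_+;\gamma(H,Y))}\leq C\|G\|_{L^2(\R_+;\gamma(H,X))}$, which used the factorization through an honest $L^2$-norm. Here the relevant norm is the genuinely two-parameter $\gamma(\R_+;H,Y)$-norm of $s\mapsto U(-s)JG(s)$, and I must show
\[
\|U(-\cdot)\, J\, G\|_{\gamma(\R_+;H,Y)} \leq C'\, \|G\|_{\gamma(\R_+;H,X)}.
\]
The plan is to use the isometry $\|U(t)y\|_Y = \|y\|_Y$ together with the ideal/multiplier properties of $\gamma$-radonifying operators: left multiplication by the bounded operator $J$ is a $\gamma$-bounded operation, and property $(\alpha)$ is precisely what guarantees that pointwise multiplication by the uniformly bounded (indeed isometric) family $\{U(-s)\}_{s\geq 0}$ defines a bounded operator on $\gamma(\R_+;H,Y)$, via the $\gamma$-multiplier theorem. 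This is the place where the type $2$ hypothesis is genuinely replaced by property $(\alpha)$. Once this $\gamma$-norm transfer estimate is in hand, the maximal estimate \eqref{eq:maxest2} follows by chaining $(\textrm{i})$, $\|P\|$, Proposition \ref{prop:NVW} applied in $Y$, and the above bound, exactly as in the display at the end of the proof of Theorem \ref{thm:main1}; finally the passage to all $p\in(0,\infty)$ is handled by Lenglart's inequality as noted after Proposition \ref{prop:NVW}.
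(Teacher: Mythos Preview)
Your overall strategy coincides with the paper's: dilate via \cite{FW}, note that $Y=\gamma(L^2(\R),X)$ is UMD by Lemma \ref{lem:UMDtype2}, apply Proposition \ref{prop:NVW} in $Y$, and control $\|U(-\cdot)JG\|_{\gamma(\R_+;H,Y)}$ through the Kalton--Weis $\gamma$-multiplier theorem. Two points need adjustment, though. First, you do not need property $(\alpha)$ for $Y$; Proposition \ref{prop:NVW} only requires UMD, so the detour through ``$\gamma(L^2(\R),X)$ inherits property $(\alpha)$'' is unnecessary. Second, and more substantively, your justification for the multiplier step is not quite right: property $(\alpha)$ does \emph{not} turn an arbitrary uniformly bounded (or isometric) family into a $\gamma$-bounded one, so ``isometry $+$ property $(\alpha)$ $\Rightarrow$ multiplier theorem applies'' fails as stated. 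What the paper uses instead is the concrete structure of the dilation group: each $U(r)\in\calL(Y)$ is the tensor extension to $\gamma(L^2(\R),X)$ of the right translation on $L^2(\R)$, and property $(\alpha)$ of $X$ (not of $Y$) guarantees, via \cite[Theorem 3.18]{HaKu}, that the tensor extensions of a uniformly bounded family of Hilbert-space operators form a $\gamma$-bounded family on $Y$, with bound $\alpha_X$. Only then does \cite{KaWe} give $\|U(-\cdot)JG\|_{\gamma(\R_+;H,Y)}\leq \alpha_X\,C\,\|G\|_{\gamma(\R_+;H,X)}$, after which your chain of inequalities concludes the proof exactly as you outline.
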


In the case that $X$ has type $2$ and property $(\alpha)$ the assertion in Theorem \ref{thm:main2} is stronger than Theorem \ref{thm:main1}. Indeed, this follows from the fact that for type $2$ spaces $X$ the space $\gamma(\R_+;H,X)$ is larger than $L^2(\R_+;\g(H,X))$ (see \cite{vNWe,RS}).

Theorem \ref{thm:main2} applies to the same situation as in Example \ref{ex:secondnondiv}.

\begin{example}
Let $q\in (1, \infty)$. Let $A$ and $X$ be as in Example \ref{ex:secondnondiv}. Then as before $A-w$ satisfies $(H)$ for some $w\in R$ large enough. Therefore, Theorem \ref{thm:main2} is applicable for any $q\in (1, \infty)$. Moreover, even for $q\in [2, \infty)$ the assertion of Theorem \ref{thm:main2} leads to stronger results in this example.
\end{example}

The proof of Theorem \ref{thm:main2} is more involved. We need to apply property $(\alpha)$ to have better structural properties of the group used in the dilation argument.

\begin{proof}[Proof of Theorem \ref{thm:main2}]
Let $Y$, $J\in \calL(X, Y)$, $P\in \calL(Y)$ and $(U(t))_{t\in \R}$ in $\calL(Y)$ be as in the proof of Theorem \ref{thm:main1}.
The equality \eqref{eq:JSG} still holds.
However, we need some arguments to see that the stochastic integral $\int_0^t U(-s) J G(s)\, d W_H(s)$ exists in $Y$.
Indeed, note that $s\mapsto U(-s) J G(s)$ is strongly measurable and adapted. Recall from \cite{FW} that $U(r)\in \calL(Y)$ is the tensor extension (in the sense of \cite{KaWe}) of the usual right-translation operator on $L^2(\R;H)$. Since $X$ has property $(\alpha)$, it follows from \cite[Theorem 3.18]{HaKu} that $(U(r))_{r\in \R}\subseteq \calL(Y)$ is $\g$-bounded by some constant $\alpha_X$. Now the multiplier result of \cite{KaWe} shows that $s\mapsto U(-s) J G(s)$ is in $\g(\R_+;H;X)$ a.s., and
\begin{equation}\label{eq:alpha}
\begin{aligned}
\|U(-s) J G(s)\|_{\g(\R_+;H,Y)}& \leq \alpha_X \|J G(s)\|_{\g(\R_+;H,Y)} \\ & \leq \alpha_X C\|G\|_{\g(\R_+;H;X)}<\infty \ \text{a.s.},
\end{aligned}
\end{equation}
where the last step follows from the left-ideal property.
Since $Y$ has UMD by Lemma \ref{lem:UMDtype2}, it follows from Proposition \ref{prop:NVW} that $t\mapsto \int_0^t U(-s) J G(s)\, d W_H(s)$ exists and has a version which is a.s.\ pathwise continuous. Therefore, by \eqref{eq:JSG} and the strong continuity of $U(t)$ it follows that $J S\diamond G$ has a version which is a.s.\ pathwise continuous. As in the proof of Theorem \ref{thm:main1} by (i) also $S\diamond G$ has a version which is a.s.\ pathwise continuous. Moreover, if $G\in L^p(\O;\g(\R_+;H,X))$ we can use \eqref{eq:JSG}, Proposition \ref{prop:NVW} and \eqref{eq:alpha} to obtain the following estimate
\begin{align*}
\E \big(\sup_{t\geq 0}\|S\diamond G(t)\|^p\big) &\leq c^{-p} \E \big(\sup_{t\geq 0}\|J S\diamond G(t)\|^p_Y \big)
\\ & = c^{-p} \E\Big(\sup_{t\geq 0} \Big\|P U(t) \int_0^t U(-s) J G(s)\, d W_H(s)\Big\|^p_Y\Big)
\\ & \leq c^{-p} \|P\|^p \E\Big(\sup_{t\geq 0} \Big\|\int_0^t U(-s) J G(s)\, d W_H(s)\Big\|^p_Y\Big)
\\ & \leq c^{-p} \|P\|^p (C_{p,Y}^{\g})^p  \|U(-s) J G(s)\|_{L^p(\O;\g(\R_+;H,X))}^p
\\ & \leq c^{-p} \|P\|^p (C_{p,Y}^{\g})^p \alpha_X^p C^p \|G\|_{L^p(\O;\g(\R_+;H,X))}^p.
\end{align*}
This completes the proof of \eqref{eq:maxest} with $C_1 = c^{-1} C \|P\|$ and $C_2 = C_{p,Y}^{\g} \alpha_X$.
\end{proof}

At this moment we do not know if there are exponential tail estimates in the general setting of Theorem \ref{thm:main2}. However, also in this setting there is some hope that $C_{p,Y}^{\g}\leq C_{X} \sqrt{p}$ for $p$ large, and by the argument in Theorem \ref{thm:exp1} this would yield exponential tail estimates again. Recently, in \cite{CoxVer2} it has been proved that $C_{p,Y}^{\g}\leq C_{X} p$ for $p$ large. This yields exponential estimates, but no exponential quadratic estimates as one would expect.

\appendix
\section{Optimal constants in the Burkholder-Davis-Gundy inequality for stochastic integrals}

For a Banach space $X$ and $p\in (0,\infty)$ let $K_{p,X}$ be the smallest constant $K$ such that
\begin{align}\label{eq:BDGwithoutsup}
\sup_{t\geq 0} \E\Big( \Big\|\int_0^t G(s) \, d W_H(s)\Big\|^p\Big) \leq K^p \|G\|_{L^p(\O;L^2(\R_+;\g(H,X)))}^p,
\end{align}
for all $G\in L^p_{\F}(\O;L^2(\R_+;\g(H,X)))$. If there does not exist such a constant $K$, we set $K_{p, X}=\infty$. Recall from Proposition \ref{prop:stochinttheorytype2} that \eqref{eq:BDGwithoutsup} holds for some $K$ if $X$ is a UMD space with type $2$. Moreover, in \cite{Seidlernew} it has been proved that $K_{p,X}\leq K_X \sqrt{p}$ for $p\geq 2$ (also see Remark \ref{rem:optimal}). Below we provide an alternative proof of this fact for the case $X = L^q$ with $q\in [2, \infty)$.
Also recall from real stochastic analysis that there is a constant $b>0$ such that for all $p\geq 2$, $K_{p,\R}\leq b \sqrt{p}$ (see \cite{Dav76}).

\begin{proposition}\label{prop:interpstoch}
Let $X_0$ and $X_1$ be Banach spaces for which \eqref{eq:BDGwithoutsup} holds and which form an interpolation couple. Assume $X_0$ is reflexive. Then the complex interpolation spaces $X_\theta = [X_0, X_1]_{\theta}$ with $\theta\in [0,1]$ satisfies \eqref{eq:BDGwithoutsup} with
\[K_{p,X_{\theta}}\leq  K_{p,X_{0}}^{1-\theta} K_{p,X_{1}}^{\theta}.\]
\end{proposition}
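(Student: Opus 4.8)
The plan is to regard the stochastic integral as a single linear operator acting consistently on the interpolation couple and to invoke the complex interpolation theorem for operators, the only genuinely nontrivial input being the behaviour of the spaces $\g(H,X)$ under complex interpolation. I shall assume throughout that $1\le p<\infty$; the range $0<p<1$ then follows by Lenglart's inequality exactly as in the other results of this paper.

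First I would fix $t\ge 0$ and introduce the stochastic integral operator $I_t\colon G\mapsto \int_0^t G\,dW_H$. For a simple adapted process $G$ with values in the finite-rank operators from $H$ into $X_0\cap X_1$, the integral $I_tG$ is given by an explicit finite sum of the form $\sum_k x_k\,(W_H(t_{k+1})h_k-W_H(t_k)h_k)$ which does not refer to the ambient Banach space. Hence $I_t$ is one and the same operator on the compatible couples
\[
A_j:=L^p(\O;L^2(\R_+;\g(H,X_j))),\qquad B_j:=L^p(\O;X_j),\qquad j=0,1,
\]
and the hypothesis that $X_0$ and $X_1$ satisfy \eqref{eq:BDGwithoutsup} says precisely that $\n I_t\n_{A_j\to B_j}\le K_{p,X_j}$, uniformly in $t$.

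By the complex interpolation theorem for operators this gives, for every $G\in A_0\cap A_1$,
\[
\n I_tG\n_{[B_0,B_1]_\theta}\le K_{p,X_0}^{1-\theta}K_{p,X_1}^{\theta}\,\n G\n_{[A_0,A_1]_\theta}.
\]
Since complex interpolation commutes with the Bochner spaces $L^p(\O;\cdot)$ and $L^2(\R_+;\cdot)$ for $1\le p<\infty$, the endpoint space on the left is $[B_0,B_1]_\theta=L^p(\O;X_\theta)$, while on the right $[A_0,A_1]_\theta=L^p(\O;L^2(\R_+;[\g(H,X_0),\g(H,X_1)]_\theta))$. The final ingredient is the interpolation inclusion for $\g$-radonifying operators, $\g(H,X_\theta)\embed[\g(H,X_0),\g(H,X_1)]_\theta$ contractively (see \cite{vNeeoverview}); this is exactly the step in which the reflexivity of $X_0$ enters. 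Combining the three identifications yields
\[
\big(\E\n I_tG\n_{X_\theta}^p\big)^{1/p}\le K_{p,X_0}^{1-\theta}K_{p,X_1}^{\theta}\,\n G\n_{L^p(\O;L^2(\R_+;\g(H,X_\theta)))}.
\]
Because the constant and the right-hand side are independent of $t$, I would then pass to the supremum over $t\ge0$ and extend from simple processes to all of $L^p_{\F}(\O;L^2(\R_+;\g(H,X_\theta)))$ by density, which gives \eqref{eq:BDGwithoutsup} for $X_\theta$ with the asserted constant.

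The main obstacle is the $\g$-interpolation inclusion $\g(H,X_\theta)\embed[\g(H,X_0),\g(H,X_1)]_\theta$. The reverse inclusion holds for every interpolation couple, since an element of $[\g(H,X_0),\g(H,X_1)]_\theta$ is automatically a Gaussian sum in $X_\theta$; but the direction needed here amounts to extending a given Gaussian sum to an analytic family of Gaussian sums with controlled endpoint norms, and this requires a geometric hypothesis on the couple — here supplied by the reflexivity of $X_0$ — rather than being formal. A secondary point to verify carefully is the consistency and density of the simple processes across the three scales $A_0$, $A_1$ and $L^p(\O;L^2(\R_+;\g(H,X_\theta)))$, so that the a priori bound obtained on the dense class genuinely extends to the whole space.
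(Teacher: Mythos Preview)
Your overall strategy---fix $t$, view the stochastic integral as a linear operator, and interpolate---is exactly the paper's. The execution diverges in two places, and in both your attribution of the role of reflexivity is off.

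First, the $\gamma$-interpolation step. You correctly identify $\g(H,X_\theta)\hookrightarrow[\g(H,X_0),\g(H,X_1)]_\theta$ as the hard direction, but reflexivity of $X_0$ is not what delivers it. The paper instead observes that the standing hypothesis \eqref{eq:BDGwithoutsup} forces $X_0$ and $X_1$ to have type $2$, hence to be $K$-convex, and it is $K$-convexity that gives the \emph{equality} $[\g(H,X_0),\g(H,X_1)]_\theta=\g(H,X_\theta)$ (via \cite[Proposition 2.3]{HvNP} or \cite{SuWe}). Reflexivity alone does not imply $K$-convexity, so your justification as written has a gap, although the conclusion survives once you trace type $2$ out of the hypothesis.

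Second, adaptedness. Your spaces $A_j=L^p(\O;L^2(\R_+;\g(H,X_j)))$ carry no adaptedness constraint, so $I_t$ is not a bounded operator on $A_j$; you need the adapted subspaces and must know that their complex interpolation space is again the adapted subspace. This is precisely where the paper spends its reflexivity assumption: it packages adaptedness into the scalar Banach function space $Y=L^p_\F(\O;L^2(0,t))$, verifies that $Y(\g(H,X_0))$ is reflexive (using reflexivity of $X_0$), and then invokes Calder\'on's theorem \cite[13.5]{Cal} to get $[Y(\g(H,X_0)),Y(\g(H,X_1))]_\theta=Y(\g(H,X_\theta))$. Your ``secondary point to verify carefully'' is thus not secondary---it is exactly where reflexivity is used in the paper---and your Bochner-space interpolation identity, while correct for the unconstrained spaces, does not by itself handle the adapted subspaces.
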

\begin{proof}
One easily checks that \eqref{eq:BDGwithoutsup} implies that $X_1$ and $X_2$ have type $2$ (see \cite{RS}). Therefore, $X_1$ and $X_2$ are $K$-convex and this implies $[\g(H,X_0), \g(H, X_1)]_{\theta} = \g(H,X_\theta)$ (see \cite[Proposition 2.3]{HvNP} or \cite{SuWe}))

Fix $t\in \R_+$ and let $Y = L^p_{\F}(\O;L^2(0,t))$. Clearly, $Y$ is Banach function space. As in \cite{Cal} write $Y(X)$ for the $X$-valued strongly measurable and adapted processes $g$ with values in $X$ for which $\|g\|_{Y(X)} := \| \, \|g\|_X\, \|_Y<\infty$. We claim that $Y(\g(H, X_0))$ is reflexive. Indeed, note that $\g(H,X_0)$  is isometric to a closed subspace of $L^2(\tilde{\O};X_0)$ for some probability space $(\tilde{\O}, \tilde{\F}, \tilde{\P})$, and the latter is reflexive since $X_0$ is reflexive. Therefore, $\g(H, X_0)$ is reflexive as well.
Now the claim follows from the fact that $Y(\g(H, X_0))$ is a closed subspace of the reflexive space $L^p(\O;L^2(0,t;\g(H,X_0)))$. By \cite[13.5]{Cal} we obtain
\[[Y(\g(H,X_0)), Y(\g(H,X_1))]_{\theta}= Y([\g(H,X_0), \g(H, X_1)]_{\theta}) = Y(\g(H,X_\theta)).\]
Similarly, one has
\[[L^p(\O;X_{0}),L^p(\O;X_1)]_\theta = L^p(\O;X_{\theta}).\]

Let $T:Y(\g(H,X_i))\to L^p(\O;X_i)$ be defined by
$T G= \int_0^t G(s) \, d W_H(s)$.
Then $\|T\|_{Y(\g(H,X_i))\to L^p(\O;X_i)}\leq K_{p,X_i}$ for $i=1,2$.
Consequently, since $[\cdot, \cdot]_{\theta}$ is an (exact) interpolation method (see \cite[Theorem 1.9.3]{Tr1}), we obtain
\begin{align*}
\|T&\|_{Y(\g(H,X_\theta))\to L^p(\O;X_\theta)}  \leq K_{p,X_0}^{1-\theta} K_{p,X_1}^{\theta}.
\end{align*}
Since $t\in \R_+$ was arbitrary, we obtain $K_{p,X_\theta} \leq K_{p,X_0}^{1-\theta} K_{p,X_1}^{\theta}$.
\end{proof}

\begin{lemma}\label{lem:easycases}
Let $p\in [2, \infty)$. The following assertions hold:
\begin{enumerate}
\item If $(\mathcal{O}, \Sigma, \mu)$ be a (nonempty) $\sigma$-finite measure space, then $K_{p,L^p(\mathcal{O})} = K_{p,\R}$.
\item If $X$ is a Hilbert space with nonzero dimension, then $K_{p,X} = K_{p,\R}$.
\end{enumerate}
\end{lemma}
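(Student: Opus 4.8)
The plan is to prove each equality by the two inequalities $K_{p,X}\ge K_{p,\R}$ and $K_{p,X}\le K_{p,\R}$; the lower bound is common to both parts and essentially formal, while the reverse bound carries the content. For the lower bound I fix a norm-one vector $x_0\in X$ and a norm-one vector $h_0\in H$ (possible since $X$ and $H$ are nonzero) and associate to a scalar adapted process $g$ the rank-one process $G(s):=g(s)\,(h_0\otimes x_0)$, i.e.\ $G(s)h=g(s)\langle h,h_0\rangle_H\,x_0$. Then $\|G(s)\|_{\g(H,X)}=|g(s)|$ and $\int_0^t G\,dW_H=\big(\int_0^t g\,d\beta\big)x_0$, where $\beta_t:=\langle W_H(t),h_0\rangle$ is a standard real Brownian motion, so that $\|\int_0^t G\,dW_H\|_X=|\int_0^t g\,d\beta|$ and $\|G\|_{L^p(\O;L^2(\R_+;\g(H,X)))}=\|g\|_{L^p(\O;L^2(\R_+))}$. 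Thus \eqref{eq:BDGwithoutsup} restricted to such $G$ is exactly the scalar inequality, which forces $K_{p,X}\ge K_{p,\R}$. This step is insensitive to the geometry of $X$ and applies to both parts.

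For part (1) the decisive point is that the outer $p$-th moment and the $L^p(\mathcal{O})$-norm share the same exponent, so a Fubini argument stays isometric at the level of $p$-th powers. I would represent $G(s)\in\g(H,L^p(\mathcal{O}))$ through its kernel $k(s,\cdot)\in L^p(\mathcal{O};H)$, characterised by $(G(s)h)(\sigma)=\langle k(s,\sigma),h\rangle_H$, and use a stochastic Fubini theorem (verified first on elementary integrands and then extended by approximation) to identify, for a.e.\ $\sigma$ and a.s., $\big(\int_0^t G\,dW_H\big)(\sigma)=\int_0^t k(s,\sigma)\,dW_H(s)$, a scalar stochastic integral. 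Applying the definition of $K_{p,\R}$ pointwise in $\sigma$ and then Tonelli yields
\[
\E\Big\|\int_0^t G\,dW_H\Big\|_{L^p(\mathcal{O})}^p=\int_{\mathcal{O}}\E\Big|\int_0^t k(s,\sigma)\,dW_H\Big|^p d\mu(\sigma)\le K_{p,\R}^p\,\E\,\|k\|_{L^p(\mathcal{O};L^2(0,t;H))}^p .
\]
It then remains to dominate this square-function norm by the $\gamma$-norm, which I would do with two applications of Minkowski's integral inequality (both valid since $p\ge2$): first $\|k\|_{L^p(\mathcal{O};L^2(0,t;H))}\le\big(\int_0^t\|k(s,\cdot)\|_{L^p(\mathcal{O};H)}^2\,ds\big)^{1/2}$, and then, using the pointwise bound $\|k(s,\cdot)\|_{L^p(\mathcal{O};H)}\le\|G(s)\|_{\g(H,L^p(\mathcal{O}))}$ (a further instance of Minkowski comparing the $L^p(\mathcal{O})$-norm of a Gaussian sum to its $L^2$-norm), the right-hand side is at most $\|G\|_{L^2(0,t;\g(H,L^p(\mathcal{O})))}\le\|G\|_{L^2(\R_+;\g(H,L^p(\mathcal{O})))}$. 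Taking the supremum over $t$ gives $K_{p,L^p(\mathcal{O})}\le K_{p,\R}$, completing part (1).

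For part (2) the exponents no longer align ($p\ne2$ in general), so the Fubini computation ceases to be isometric and a genuinely martingale-theoretic argument is required. The case $p=2$ is immediate from the It\^o isometry $\E\|\int_0^t G\,dW_H\|_X^2=\E\int_0^t\|G(s)\|_{\g(H,X)}^2\,ds$, which gives $K_{2,X}=1=K_{2,\R}$. For general $p\ge2$ I would use that $M_t:=\int_0^t G\,dW_H$ is a continuous $X$-valued local martingale whose scalar quadratic variation is exactly $[M]_t=\int_0^t\|G(s)\|_{\g(H,X)}^2\,ds$ (here $\g(H,X)$ is the Hilbert--Schmidt class), and then invoke the Burkholder--Davis--Gundy inequality for Hilbert-space-valued continuous local martingales with the same optimal constant as in the scalar case, namely $\E\|M_t\|^p\le K_{p,\R}^p\,\E[M]_t^{p/2}$. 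Together with the lower bound this yields $K_{p,X}=K_{p,\R}$.

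The main obstacle is precisely this exact-constant preservation in part (2). One cannot simply sum the scalar inequality over an orthonormal basis: writing $M=\sum_k N^k e_k$, for $p\ge2$ one has $\sum_k|N^k_t|^p\le(\sum_k|N^k_t|^2)^{p/2}=\|M_t\|^p$, which is the wrong direction, and the $p$-th power no longer factors through Fubini as in part (1). I would therefore appeal to the sharp Hilbert-valued martingale inequalities (of the type established by the method of Pinelis, cf.\ Remark \ref{rem:optimal}), which preserve the scalar constant because the only data entering the estimate are the real submartingale $\|M_t\|^2$ and its compensator $[M]_t$. A secondary technical point, confined to part (1), is the careful justification of the stochastic Fubini identification of $\big(\int_0^t G\,dW_H\big)(\sigma)$ with the scalar integral of the kernel, which I would settle by first treating elementary integrands and passing to the limit.
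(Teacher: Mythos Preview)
Your treatment of part~(1) matches the paper's proof essentially step for step: Fubini via a kernel representation, the scalar inequality applied pointwise in $\sigma$, then two uses of Minkowski (one to pass from $L^p(\mathcal{O};L^2)$ to $L^2(\cdot;L^p(\mathcal{O};H))$, one to compare $\|k(s,\cdot)\|_{L^p(\mathcal{O};H)}$ with $\|G(s)\|_{\g(H,L^p(\mathcal{O}))}$). The lower bound via rank-one processes is also exactly what the paper calls ``trivial''.

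For part~(2) the paper takes a different and much shorter route. Rather than invoking a Hilbert-valued Burkholder--Davis--Gundy inequality with the exact scalar constant, the paper first reduces to separable $X$ (strong measurability of $G$) and then uses the fact that any separable Hilbert space embeds \emph{isometrically} as a closed subspace of $L^p(0,1)$ (a sequence of i.i.d.\ standard Gaussians spans such a copy; see \cite[Proposition~6.4.13]{AlKa}). Since the norm is preserved, $K_{p,X}$ equals the corresponding constant for that closed subspace of $L^p(0,1)$, which by part~(1) is $K_{p,\R}$. This avoids entirely the question of whether the optimal BDG constant transfers to Hilbert spaces.

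Your approach to part~(2) is plausible in spirit but has a soft spot precisely where you flag it: you need $\E\|M_t\|_X^p\le K_{p,\R}^p\,\E[M]_t^{p/2}$ with the \emph{same} optimal constant as in the scalar case, and this is exactly the statement to be proved. Pinelis' results (Remark~\ref{rem:optimal}) give constants of the right order $C\sqrt{p}$ for $2$-smooth spaces, but not the identity $K_{p,X}=K_{p,\R}$; the argument that ``only $\|M_t\|^2$ and $[M]_t$ enter'' does not by itself pin down the constant, and naive coordinate-wise or time-change reductions fail (as you note, summing $|N^k_t|^p$ goes the wrong way, and a DDS-type time change does not reduce a Hilbert-valued martingale with $[\tilde M]_s=s$ to a Brownian motion unless the tensor quadratic variation is scalar). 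The paper's embedding trick sidesteps all of this and is the missing idea you want here.
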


\begin{proof}
\

(1):\ This follows from Fubini theorem. However, due to operator valued setting some technicalities have to be overcome.
Write $X=L^p(\mathcal{O})$. By a density argument, it suffices to consider adapted step processes $G$ which take values in the finite rank operators, i.e.
\[ G = \sum_{n=1}^N \one_{(t_{n-1},t_n]} \sum_{m=1}^M \one_{A_{mn}} \sum_{j=1}^J h_{j}\otimes x_{j mn}.\]
Here $0=t_0< t_1<\ldots<t_{N}=t$, the sets $(A_{mn})_{m=1}^M$ are in $\F_{t_n}$, $(h_{j})_{j=1}^J$ in $H$ are orthonormal and $(x_{jmn})_{j,m,n}$ are in $X$.

Let $g:\R_+\times\O\times \mathcal{O}\to H$ be given by
\[g =  \sum_{n=1}^N \one_{(t_{n-1},t_n]} \sum_{m=1}^M \one_{A_{mn}} \sum_{j=1}^J x_{j mn} \otimes h_{j}.\]
Now fix some time $t>0$.
Recall that $\g(H,\R) = H$ By Fubini's theorem we can write
\begin{align*}
[\E\Big( \Big\|\int_0^t G(s) \, d W_H(s)\Big\|^p_{X} &= \int_{\mathcal{O}} \E\Big|\int_0^t g(s,\cdot, r) \, d W_H(s)\Big|^p \, d\mu(r)
\\ & \leq K_{p,\R}^p \int_{\mathcal{O}}  \|g(\cdot,\cdot, r)\|_{L^p(\O;L^2(0,t;H))}^p  \, d\mu(r)
\\ & = K_{p,\R}^p \|g\|_{L^p(\O;L^p(\mathcal{O}; L^2(0,t;H)))}^p
\\ & \stackrel{(i)}{\leq} K_{p,\R}^p \|g\|_{L^p(\O; L^2(0,t;L^p(\mathcal{O};H)))}^p
\\ & \stackrel{(ii)}{\leq} \|G\|_{L^p(\O;L^2(\R_+;\g(H,X)))},
\end{align*}
The estimate (i) follows from Minkowski's inequality with exponent $p/2$. To see that (ii) holds, let $f\in L^p(\mathcal{O};H)$ and $F\in \gamma(H, L^p(\mathcal{O}))$ be given by $(F h)(r) = [h,f(r)]_H$. Let $(h_j)_{j\geq 1}$ be an orthonormal basis for $H$. Then by randomization and Minkowski's inequality with exponent $p/2$, we have
\begin{align*}
\|f\|_{L^p(\mathcal{O};H)} &= \Big\|\Big(\sum_{j\geq 1} |[h_j,f]|^2\Big)^{1/2} \Big\|_{L^p(\mathcal{O})}
 = \Big\|\Big(\sum_{j\geq 1} |F h_j|^2\Big)^{1/2} \Big\|_{L^p(\mathcal{O})}
\\ & = \Big\|\sum_{j\geq 1} \gamma_j F h_j\|_{L^p(\mathcal{O};L^2(\widetilde{\O}))}
 \leq \Big\|\sum_{j\geq 1} \gamma_j F h_j\|_{L^2(\widetilde{\O};L^p(\mathcal{O}))}
 = \|F\|_{\g(H,L^p(\mathcal{O}))}.
\end{align*}
Here $(\gamma_j)_{j\geq 1}$ is a Gaussian sequence on a probability space $(\widetilde{\O}, \widetilde{\mathcal{A}}, \widetilde{\P})$.
This proves (ii) and therefore, $K_{p,X}\leq K_{p,\R}$. The converse estimate is trivial.

(2): This seems to be well-known to experts. A short proof can be given using (1). Fix $G\in L^p_{\mathcal{F}}(\O;L^2(\R_+;\g(H,X))$. Since $G$ is strongly measurable it takes values in a separable subspace of $\g(H,X)$. Thereforem we can replace $X$ by a separable Hilbert space $X_0$ if necessary. Now the result follows from (1), because any separable Hilbert space is isometric to a closed subspace of $L^p(0,1)$ (see \cite[Proposition 6.4.13]{AlKa}).
\end{proof}

As a consequence we obtain the following result.

\begin{theorem}\label{thm:constantLq}
Let $(\mathcal{O}, \Sigma, \mu)$ be a (nonempty) $\sigma$-finite measure space and let $q\in [2, \infty)$. Let $X$ be a closed subspace of $L^q(\mathcal{O})$. Then for all $p\in [q, \infty)$ one has $K_{p,X} = K_{p,\R}$ for the optimal constants from \eqref{eq:BDGwithoutsup}.
\end{theorem}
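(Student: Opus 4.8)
The plan is to reduce the general closed subspace $X\subseteq L^q(\mathcal{O})$ to the full space $L^q(\mathcal{O})$, and then to identify $K_{p,L^q(\mathcal{O})}$ with $K_{p,\R}$ by complex interpolation between the Hilbert space $L^2(\mathcal{O})$ and the space $L^p(\mathcal{O})$, both of which already carry the optimal constant $K_{p,\R}$ by Lemma \ref{lem:easycases}. The matching lower bound will be the trivial direction.

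First I would record a monotonicity property of the constant under passing to closed subspaces: if $X$ is a closed subspace of a Banach space $Z$, then $K_{p,X}\le K_{p,Z}$. This is immediate from the consistency of the stochastic integral. Indeed, for an adapted $G$ taking values in $\g(H,X)\subseteq \g(H,Z)$ the integral $\int_0^t G\,dW_H$ remains in $X$, while the isometric embedding $X\embed Z$ leaves both $\|\int_0^t G\,dW_H\|$ and the $\g(H,X)$-norm of the integrand unchanged. Hence \eqref{eq:BDGwithoutsup} for $Z$ applies verbatim to such $G$, giving $K_{p,X}\le K_{p,Z}$. Applying this with $Z=L^q(\mathcal{O})$, it suffices to prove $K_{p,L^q(\mathcal{O})}=K_{p,\R}$.

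For the upper bound on $K_{p,L^q(\mathcal{O})}$, since $2\le q\le p$ there is $\theta\in[0,1]$ with $\tfrac1q=\tfrac{1-\theta}{2}+\tfrac{\theta}{p}$, and by the classical Calder\'on identity $[L^2(\mathcal{O}),L^p(\mathcal{O})]_\theta=L^q(\mathcal{O})$. Both endpoints satisfy \eqref{eq:BDGwithoutsup}: the Hilbert space $L^2(\mathcal{O})$ is reflexive with $K_{p,L^2(\mathcal{O})}=K_{p,\R}$ by Lemma \ref{lem:easycases}(2), and $K_{p,L^p(\mathcal{O})}=K_{p,\R}$ by Lemma \ref{lem:easycases}(1). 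Proposition \ref{prop:interpstoch}, applied with $X_0=L^2(\mathcal{O})$ and $X_1=L^p(\mathcal{O})$, then yields
\[K_{p,L^q(\mathcal{O})}\le K_{p,L^2(\mathcal{O})}^{1-\theta}K_{p,L^p(\mathcal{O})}^{\theta}=K_{p,\R}.\]
The matching lower bound $K_{p,\R}\le K_{p,X}$ is trivial: one tests \eqref{eq:BDGwithoutsup} on rank-one integrands $g\,(h_0\otimes x_0)$ with a fixed unit vector $x_0\in X$, a fixed unit $h_0\in H$, and a scalar adapted $g$, exactly as in the proof of Lemma \ref{lem:easycases}. Combining the two directions gives $K_{p,X}=K_{p,\R}$.

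The individual steps are short because the genuine work already sits in Lemma \ref{lem:easycases} and Proposition \ref{prop:interpstoch}; this theorem only assembles them. The one point that needs care is the subspace monotonicity: one must check that the $\g$-radonifying norm of an operator into $X$ is unaffected by viewing its range inside the larger space $L^q(\mathcal{O})$, and that the integral of an $X$-valued process does not leave $X$. Both facts are standard but should be stated explicitly, since interpolation by itself only delivers the full space $L^q(\mathcal{O})$ and not an arbitrary closed subspace of it.
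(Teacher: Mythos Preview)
Your proof is correct and follows essentially the same route as the paper: reduce to $X=L^q(\mathcal{O})$, then interpolate between $X_0=L^2(\mathcal{O})$ and $X_1=L^p(\mathcal{O})$ via Proposition~\ref{prop:interpstoch} and invoke Lemma~\ref{lem:easycases} at the endpoints, with the converse inequality declared trivial. Your write-up simply makes explicit the subspace monotonicity behind the paper's ``without loss of generality'' and spells out the rank-one testing for the lower bound.
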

\begin{proof}
Without loss of generality we can assume $X = L^q(\mathcal{O})$. Let $\theta\in (0,1)$ be such that $\frac1q = \frac{1-\theta}{2} + \frac{\theta}{p}$. Then it follows from Proposition \ref{prop:interpstoch} with $X_0 = L^2(\mathcal{O})$ and $X_1 = L^p(\mathcal{O})$ and $X_{\theta} = L^q(\mathcal{O})$ that
$K_{p,L^q(\mathcal{O}} \leq K_{p,L^2(\mathcal{O})}^{1-\theta} K_{p,L^p(\mathcal{O})}^{\theta}$.
Combining this with Lemma \ref{lem:easycases} yields $K_{p,L^q(\mathcal{O})}\leq K_{p,\R}$. The converse inequality is trivial.
\end{proof}

\begin{corollary}\label{cor:BDGopt}
Let $(\mathcal{O}, \Sigma, \mu)$ be a (nonempty) $\sigma$-finite measure space, let $q\in [2, \infty)$ and let $X$ be a closed subspace of $L^q(\mathcal{O})$. Then for all $p\in [q, \infty)$,
\[\Big(\E\sup_{t\geq 0} \Big\|\int_0^t G(s) \, d W_H(s)\Big\|_{X}^p\Big)^{1/p} \leq K_{p,\R} p' \|G\|_{L^p(\O;L^2(\R_+;\g(H,X)))},\]
where $p'\in (1, 2]$ is such that $\frac{1}{p}+\frac{1}{p'}=1$.
\end{corollary}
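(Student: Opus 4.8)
The plan is to combine the supremum-free estimate of Theorem \ref{thm:constantLq} with a Doob maximal inequality applied to the real-valued submartingale $t\mapsto \|M_t\|_X$, where $M_t := \int_0^t G(s)\,dW_H(s)$. Since $X$ is a closed subspace of $L^q(\mathcal{O})$ with $q\in[2,\infty)$, it is a UMD space with type $2$, so by Proposition \ref{prop:stochinttheorytype2} the process $M$ is a well-defined, a.s.\ continuous local martingale; under the stated assumption $G\in L^p_{\F}(\O;L^2(\R_+;\g(H,X)))$ with $p\geq q\geq 2$ it is in fact an $L^p$-bounded martingale, as follows from Theorem \ref{thm:constantLq}.

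First I would invoke Theorem \ref{thm:constantLq}: for $p\in[q,\infty)$ one has $K_{p,X}=K_{p,\R}$, which by the definition \eqref{eq:BDGwithoutsup} of $K_{p,X}$ yields the supremum-free bound
\[\sup_{t\geq 0}\E\Big\|\int_0^t G(s)\,dW_H(s)\Big\|_X^p \leq K_{p,\R}^p\,\|G\|_{L^p(\O;L^2(\R_+;\g(H,X)))}^p.\]
In particular $\sup_{t\geq0}\E\|M_t\|_X^p<\infty$, so $M$ is $L^p$-bounded.

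Next I would insert the supremum by a maximal inequality. Because the norm is convex, $\|M_t\|_X$ is a nonnegative submartingale, and hence $\|M_t\|_X^p$ is a submartingale for $p\geq1$. Doob's $L^p$ maximal inequality on a finite interval $[0,T]$ gives
\[\E\sup_{t\in[0,T]}\|M_t\|_X^p \leq (p')^p\,\E\|M_T\|_X^p \leq (p')^p\sup_{t\geq0}\E\|M_t\|_X^p,\]
with $p'=p/(p-1)\in(1,2]$ since $p\geq2$. Letting $T\to\infty$ and applying monotone convergence promotes this to $\sup_{t\geq 0}$. Combining the two displays and taking $p$-th roots yields the claimed estimate with constant $K_{p,\R}\,p'$.

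The only point requiring care is the justification of Doob's inequality on the unbounded time horizon and the passage $T\to\infty$: one works on $[0,T]$ first, uses the $L^p$-boundedness from the first step to control $\E\|M_T\|_X^p$ uniformly in $T$, and then takes a monotone limit. This is routine once the supremum-free bound of Theorem \ref{thm:constantLq} is in hand, so I expect no serious obstacle; the substance of the corollary is entirely contained in the identity $K_{p,X}=K_{p,\R}$ together with the universal Doob constant $p'$.
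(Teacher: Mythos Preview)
Your proof is correct and follows essentially the same approach as the paper: apply Doob's maximal $L^p$-inequality to the nonnegative submartingale $t\mapsto\|M_t\|_X$ and combine it with the identity $K_{p,X}=K_{p,\R}$ from Theorem~\ref{thm:constantLq}. The paper's proof is a one-line reference to exactly these two ingredients; you have simply supplied the routine details (the $T\to\infty$ passage via monotone convergence and the $L^p$-boundedness of $M$).
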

Recall that there is a constant $b>0$ such that $K_{p,\R}\leq b \sqrt{p}$ for all $p\in [2, \infty)$. Therefore, in the above result we have $K_{p,\R} \, p'\leq 2 b \sqrt{p}$ as soon as $p\in [q,\infty)$. This is a rather precise description of the behavior of the constant as $p\to \infty$ and has important consequences.

\begin{proof}
This follows directly from Doob's maximal $L^p$-inequality for the submartingale  $\Big\|\int_0^\cdot G(s) \, d W_H(s)\Big\|$ combined with \eqref{thm:constantLq}.
\end{proof}

\medskip

\noindent {\em Acknowledgement} \
In the first version of this paper the proof of Theorem \ref{thm:exp1} was only valid for the case $X = L^q$. The authors thank professor Jan Seidler for sending them his paper \cite{Seidlernew} where it has been proved that the constants in Proposition \ref{prop:stochinttheorytype2} behave as $C_{p,X}\leq C_X \sqrt{p}$ for $p$ large.
This enabled us to cover all UMD spaces with type $2$ in Theorem \ref{thm:exp1}.

The authors would like to thank the anonymous referee for pointing out a mistake in Section \ref{sec:alt}.

\def\cprime{$'$} \def\cprime{$'$} \def\cprime{$'$}


\begin{thebibliography}{10}

\bibitem{AlKa}
F.~Albiac and N.J. Kalton.
\newblock {\em Topics in {B}anach space theory}, volume 233 of {\em Graduate
  Texts in Mathematics}.
\newblock Springer, New York, 2006.

\bibitem{Amdual}
H.~Amann.
\newblock Dual semigroups and second order linear elliptic boundary value
  problems.
\newblock {\em Israel J. Math.}, 45(2-3):225--254, 1983.

\bibitem{Brz1}
Z.~Brze{\'z}niak.
\newblock Stochastic partial differential equations in {M}-type {$2$} {B}anach
  spaces.
\newblock {\em Potential Anal.}, 4(1):1--45, 1995.

\bibitem{Brz2}
Z.~Brze{\'z}niak.
\newblock On stochastic convolution in {B}anach spaces and applications.
\newblock {\em Stochastics Stochastics Rep.}, 61(3-4):245--295, 1997.

\bibitem{BrzHaus}
Z.~Brze{\'z}niak and E.~Hausenblas.
\newblock Maximal regularity for stochastic convolutions driven by {L}\'evy
  processes.
\newblock {\em Probab. Theory Related Fields}, 145(3-4):615--637, 2009.

\bibitem{BrzPes}
Z.~Brze{\'z}niak and S.~Peszat.
\newblock Maximal inequalities and exponential estimates for stochastic
  convolutions in {B}anach spaces.
\newblock In {\em Stochastic processes, physics and geometry: new interplays,
  {I} ({L}eipzig, 1999)}, volume~28 of {\em CMS Conf. Proc.}, pages 55--64.
  Amer. Math. Soc., Providence, RI, 2000.

\bibitem{Cal}
A.-P. Calder{\'o}n.
\newblock Intermediate spaces and interpolation, the complex method.
\newblock {\em Studia Math.}, 24:113--190, 1964.

\bibitem{CoxVer2}
S.G. Cox and M.C. Veraar.
\newblock {Vector-valued decoupling and the Burkholder-Davis-Gundy inequality.}
\newblock Submitted, 2010.

\bibitem{DPZ}
G.~Da~Prato and J.~Zabczyk.
\newblock {\em Stochastic equations in infinite dimensions}, volume~44 of {\em
  Encyclopedia of Mathematics and its Applications}.
\newblock Cambridge University Press, Cambridge, 1992.

\bibitem{Dav76}
B.~Davis.
\newblock On the {$L^{p}$} norms of stochastic integrals and other martingales.
\newblock {\em Duke Math. J.}, 43(4):697--704, 1976.

\bibitem{DDHPV}
R.~Denk, G.~Dore, M.~Hieber, J.~Pr{\"u}ss, and A.~Venni.
\newblock New thoughts on old results of {R}. {T}.\ {S}eeley.
\newblock {\em Math. Ann.}, 328(4):545--583, 2004.

\bibitem{DGZ}
R.~Deville, G.~Godefroy, and V.~Zizler.
\newblock {\em Smoothness and renormings in {B}anach spaces}, volume~64 of {\em
  Pitman Monographs and Surveys in Pure and Applied Mathematics}.
\newblock Longman Scientific \& Technical, Harlow, 1993.

\bibitem{FW}
A.M. Fr{\"o}hlich and L.W. Weis.
\newblock {$H^\infty$} calculus and dilations.
\newblock {\em Bull. Soc. Math. France}, 134(4):487--508, 2006.

\bibitem{HaKu}
B.H. Haak and P.C. Kunstmann.
\newblock {A}dmissibility of unbounded operators and wellposedness of linear
  systems in {B}anach spaces.
\newblock {\em Integral Equations Oper. Theory}, 55(4):497--533, 2006.

\bibitem{Haase:2}
M.~Haase.
\newblock {\em The functional calculus for sectorial operators}, volume 169 of
  {\em Operator Theory: Advances and Applications}.
\newblock Birkh\"auser Verlag, Basel, 2006.

\bibitem{HauSei1}
E.~Hausenblas and J.~Seidler.
\newblock A note on maximal inequality for stochastic convolutions.
\newblock {\em Czechoslovak Math. J.}, 51(126)(4):785--790, 2001.

\bibitem{HauSei2}
E.~Hausenblas and J.~Seidler.
\newblock Stochastic convolutions driven by martingales: maximal inequalities
  and exponential integrability.
\newblock {\em Stoch. Anal. Appl.}, 26(1):98--119, 2008.

\bibitem{HvNP}
T.~Hyt{\"o}nen, J.~van Neerven, and P.~Portal.
\newblock Conical square function estimates in {UMD} {B}anach spaces and
  applications to {$H^\infty$}-functional calculi.
\newblock {\em J. Anal. Math.}, 106:317--351, 2008.

\bibitem{KaWe}
N.J. Kalton and L.W. Weis.
\newblock The {$H^\infty$}-calculus and square function estimates.
\newblock Preprint, 2004.

\bibitem{KuWe}
P.C. Kunstmann and L.W. Weis.
\newblock Maximal {$L\sb p$}-regularity for parabolic equations, {F}ourier
  multiplier theorems and {$H\sp \infty$}-functional calculus.
\newblock In {\em Functional analytic methods for evolution equations}, volume
  1855 of {\em Lecture Notes in Math.}, pages 65--311. Springer, Berlin, 2004.

\bibitem{LaMa}
M.~Langer and V.~Maz{\cprime}ya.
\newblock On {$L^p$}-contractivity of semigroups generated by linear partial
  differential operators.
\newblock {\em J. Funct. Anal.}, 164(1):73--109, 1999.

\bibitem{Lenglart}
E.~Lenglart.
\newblock Relation de domination entre deux processus.
\newblock {\em Ann. Inst. H. Poincar\'e Sect. B (N.S.)}, 13(2):171--179, 1977.

\bibitem{LSV}
V.~Liskevich, Z.~Sobol, and H.~Vogt.
\newblock On the {$L_p$}-theory of {$C_0$}-semigroups associated with
  second-order elliptic operators. {II}.
\newblock {\em J. Funct. Anal.}, 193(1):55--76, 2002.

\bibitem{McI}
A.~McIntosh.
\newblock Operators which have an {$H\sb \infty$} functional calculus.
\newblock In {\em Miniconference on operator theory and partial differential
  equations ({N}orth {R}yde, 1986)}, volume~14 of {\em Proc. Centre Math. Anal.
  Austral. Nat. Univ.}, pages 210--231. Austral. Nat. Univ., Canberra, 1986.

\bibitem{vNeeoverview}
J.M.A.M.~van Neerven.
\newblock $\g$-radonifying operators - a survey.
\newblock http://arxiv.org/abs/0911.3788, 2009.

\bibitem{NVW1}
J.M.A.M.~van Neerven, M.C. Veraar, and L.W. Weis.
\newblock Stochastic integration in {UMD} {B}anach spaces.
\newblock {\em Ann. Probab.}, 35(4):1438--1478, 2007.

\bibitem{NVW3}
J.M.A.M.~van Neerven, M.C. Veraar, and L.W. Weis.
\newblock Stochastic evolution equations in {UMD} {B}anach spaces.
\newblock {\em J. Funct. Anal.}, 255(4):940--993, 2008.

\bibitem{vNWe}
J.M.A.M.~van Neerven and L.W. Weis.
\newblock Weak limits and integrals of {G}aussian covariances in {B}anach
  spaces.
\newblock {\em Probab. Math. Statist.}, 25(1, Acta Univ. Wratislav. No.
  2784):55--74, 2005.

\bibitem{Pin94}
I.~Pinelis.
\newblock Optimum bounds for the distributions of martingales in {B}anach
  spaces.
\newblock {\em Ann. Probab.}, 22(4):1679--1706, 1994.

\bibitem{Pi75}
G.~Pisier.
\newblock Martingales with values in uniformly convex spaces.
\newblock {\em Israel J. Math.}, 20(3-4):326--350, 1975.

\bibitem{Pialpha}
G.~Pisier.
\newblock Some results on {B}anach spaces without local unconditional
  structure.
\newblock {\em Compositio Math.}, 37(1):3--19, 1978.

\bibitem{RS}
J.~Rosi{\'n}ski and Z.~Suchanecki.
\newblock On the space of vector-valued functions integrable with respect to
  the white noise.
\newblock {\em Colloq. Math.}, 43(1):183--201 (1981), 1980.

\bibitem{Seidlernew}
J.~Seidler.
\newblock {Exponential estimates for stochastic convolutions in $2$-smooth
  Banach spaces.}
\newblock preprint, 2010.

\bibitem{SuWe}
J.~Su{\'a}rez and L.W. Weis.
\newblock Interpolation of {B}anach spaces by the {$\gamma$}-method.
\newblock In {\em Methods in {B}anach space theory}, volume 337 of {\em London
  Math. Soc. Lecture Note Ser.}, pages 293--306. Cambridge Univ. Press,
  Cambridge, 2006.

\bibitem{Tr1}
H.~Triebel.
\newblock {\em Interpolation theory, function spaces, differential operators}.
\newblock Johann Ambrosius Barth, Heidelberg, second edition, 1995.

\bibitem{Vcont}
M.C. Veraar.
\newblock Continuous local martingales and stochastic integration in {UMD}
  {B}anach spaces.
\newblock {\em Stochastics}, 79(6):601--618, 2007.

\bibitem{Weis-survey}
L.W. Weis.
\newblock The {$H\sp \infty$} holomorphic functional calculus for sectorial
  operators---a survey.
\newblock In {\em Partial differential equations and functional analysis},
  volume 168 of {\em Oper. Theory Adv. Appl.}, pages 263--294. Birkh\"auser,
  Basel, 2006.

\end{thebibliography}
\end{document}